\theoremstyle{plain}
\newtheorem{thm}{Theorem}[section]
\newtheorem{lem}[thm]{Lemma}
\newtheorem{cor}[thm]{Corollary}
\newtheorem{conj}[thm]{Conjecture}
\theoremstyle{definition}
\newtheorem{rem}[thm]{Remark}
\numberwithin{equation}{section}
\newcommand{\Z}{\mathbb{Z}}
\newcommand{\Q}{\mathbb{Q}}
\newcommand{\R}{\mathbb{R}}
\newcommand{\C}{\mathbb{C}}
\newcommand{\cA}{\mathcal{A}}
\newcommand{\cS}{\mathcal{S}}
\newcommand{\cZ}{\mathcal{Z}}
\newcommand{\hatA}{{\widehat{\mathcal{A}}}}
\newcommand{\hatS}{{\widehat{\mathcal{S}}}}
\newcommand{\hatU}{{\widehat{\mathcal{U}}}}
\newcommand{\frD}{\mathfrak{D}}
\newcommand{\bbra}[1]{\llbracket #1 \rrbracket}
\newcommand{\bk}{{\boldsymbol{k}}}
\newcommand{\bl}{{\boldsymbol{l}}}
\newcommand{\bp}{{\boldsymbol{p}}}
\newcommand{\bs}{{\boldsymbol{s}}}
\DeclareMathOperator{\Ker}{Ker}
\title{On the refined Kaneko--Zagier conjecture for general integer indices}
\author{Masataka Ono \and Shuji Yamamoto}
\date{\today}
\subjclass[2010]{Primary 11M32, Secondary 11M35}
\keywords{Finite multiple zeta values; Kaneko--Zagier conjecture; Unified multiple zeta function}
\thanks{The first author is supported by Waseda University Grants for Special Research Projects (Grant No. 2021C-673). The second author is supported by JSPS KAKENHI JP18H05233, JP18K03221 and JP21K03185. }
\address[M.~Ono]{
	Global Education Center\\
	Waseda University\\
	1-6-1, Nishi-Waseda, Shinjuku-ku, Tokyo, 169-8050\\
	Japan}
\email{m-ono@aoni.waseda.jp}
\address[S.~Yamamoto]{
    Department of Mathematics, Faculty of Science and Technology, Keio University, 
    3-14-1 Hiyoshi, Kouhoku-ku, Yokohama 223-8522, Japan
}
\email{yamashu@math.keio.ac.jp}
\begin{document}

\begin{abstract}
The refined Kaneko--Zagier conjecture claims that the algebras spanned by two kinds of 
``completed'' finite multiple zeta values, called $\hatA$- and $\hatS$-MZVs, are isomorphic. 
Recently, Komori defined $\hatS$-MZVs of general integer (i.e., not necessarily positive) indices, 
extending the existing definition for positive indices. 
In view of the refined Kaneko--Zagier conjecture, Komori's work suggests that these extended values are 
closely connected to $\hatA$-MZVs of general indices, which can be defined in an obvious way. 
In this paper, we show that the generalization of the refined Kaneko--Zagier conjecture for general integer indices 
is actually deduced from the conjecture for positive indices. 
The key ingredient is an inductive formula for $\hatA$-MZVs  or $\hatS$-MZVs of indices 
which contain at least one non-positive entry. 
\end{abstract}

\maketitle

\section{Introduction}\label{sec:Intro}

For any $r \in \Z_{\ge0}$, we call an $r$-tuple $\bk=(k_1, \dots, k_r)$ of integers an \emph{index}. 
Such $\bk$ is said \emph{positive} if all entries $k_i$ are positive. 
In order to emphasize that a given index is not necessarily positive, we often call it a \emph{general} index. 
The length $r$ of an index $\bk=(k_1,\ldots,k_r)$ is called the \emph{depth} of $\bk$ and, 
if $\bk$ is positive, the sum of its entries $k_1+\cdots+k_r$ is called the \emph{weight} of $\bk$. 
Let $\varnothing$ denote the unique index of depth $0$, called the \emph{empty index}. 
We regard $\varnothing$ as a positive index of weight $0$. 

A positive index $\bk=(k_1, \dots, k_r)$ is said \emph{admissible} if $\bk=\varnothing$ or $k_r\ge2$.
For an admissible positive index $\bk=(k_1, \dots, k_r)$, the multiple zeta value (MZV) $\zeta(\bk)$ is a real number defined by
\[\zeta(\bk)
\coloneqq \sum_{0<n_1<\dots<n_r}\frac{1}{n^{k_1}_1 \dots n^{k_r}_r}.\]
MZVs have been extensively investigated from various viewpoints, 
and several variants of them are also considered. 
In particular, since the discovery of a surprising conjecture by Kaneko and Zagier \cite{KZ}, 
two kinds of finite multiple zeta values, which we call $\cA$- and $\cS$-MZVs, are actively studied. 
In this paper, we treat the completed versions of them, called $\hatA$-MZVs \cite{R2,S} and $\hatS$-MZVs \cite{J,OSY}. 
We recall the definitions of them below. 

Let $\bk=(k_1,\ldots,k_r)$ be a positive index, not necessarily admissible. 
Then the first variant, the $\hatA$-MZV $\zeta_{\hatA}(\bk)$, is an element of the $\Q$-algebra 
\[\hatA\coloneqq\varprojlim_n
\Biggl[\Biggl(\prod_{p:\text{prime}} \Z/p^n\Z\Biggr)\Bigg/\Biggl(\bigoplus_{p:\text{prime}} \Z/p^n\Z\Biggr)\Biggr], \]
in which $\Q$ is diagonally embedded. 
We define, for each prime number $p$, an element of the localization $\Z_{(p)}$ of $\Z$ with respect to 
the prime ideal $(p)$ by 
\begin{equation}\label{eq:zeta_p}
\zeta^{}_{p}(\bk) 
\coloneqq\sum_{0<n_1<\cdots<n_r<p}\frac{1}{n^{k_1}_1\cdots n^{k_r}_r} \in \Z_{(p)}. 
\end{equation}
Then $\zeta_{\hatA}(\bk)$ is defined by 
\begin{equation}\label{eq:zeta_hatA}
\zeta^{}_{\hatA}(\bk)
\coloneqq\Bigl(\bigl(\zeta^{}_{p}(\bk) \bmod{p^n}\bigr)_p\Bigr)_n \in \hatA. 
\end{equation}
Next, let $\cZ$ denote the $\Q$-subalgebra of $\R$ generated by all MZVs 
and set $\overline{\cZ}\coloneqq\cZ/\pi^2\cZ$ (note that $\pi^2=6\zeta(2)\in\cZ$). 
Then the second variant of MZV, the $\hatS$-MZV $\zeta^{}_{\hatS}(\bk)$, is 
a formal power series in an indeterminate $t$ with coefficients in $\overline{\cZ}$ defined by 
\begin{align}
\zeta^{}_{\hatS}(\bk)
&\coloneqq
\sum_{i=0}^{r}(-1)^{k_{i+1}+\cdots+k_r}\zeta^{*}(k_1, \dots, k_i)\label{eq:zeta_hatS}\\
&\quad
\times\sum_{l_{i+1}, \dots, l_r \ge0}
\Biggl\{\prod_{j=i+1}^{r}\binom{k_j+l_j-1}{l_j}t^{l_j}\Biggr\}
\zeta^{*}(k_r+l_r, \dots, k_{i+1}+l_{i+1}) \bmod{\pi^2}. \notag
\end{align}
Here, $\zeta^{*}(\bk) \in \cZ$ denotes the harmonic regularized MZV, which is defined for any positive index $\bk$ 
(for the definition of the regularized MZVs, refer to \cite{IKZ}). 
This notion of the $\hatS$-MZV is equivalent to the $\Lambda$-adjoint MZVs introduced by Jarossay \cite{J}. 

It is conjectured that the $\hatA$-MZVs and the $\hatS$-MZVs satisfy exactly the same algebraic relations 
(the refined Kaneko--Zagier conjecture \cite{J, OSY, R2}; see \S\ref{sec:KZconj} for more details). 
Though this conjecture seems still far from resolution, 
it suggests that some phenomenon observed in $\hatA$-MZVs should have an analogue in $\hatS$-MZVs and vice versa. 

Now let us observe that the definition of $\zeta^{}_\hatA(\bk)$ given by \eqref{eq:zeta_p} and \eqref{eq:zeta_hatA} 
is meaningful for general indices $\bk$ as well. 
On the other hand, the definition \eqref{eq:zeta_hatS} of $\zeta^{}_\hatS(\bk)$ does not work for such $\bk$, 
since the regularized MZVs are defined only for positive indices. 
This gap was filled by Komori \cite{Ko}, who found a natural definition of $\zeta^{}_\hatS(\bk)$ for general $\bk$. 
In fact, he introduced a complex analytic function $\zeta_\hatU(s_1,\ldots,s_r;t)$, 
called the unified multiple zeta function. 
This function is entire with respect to $s_1,\ldots,s_r$ and satisfies the interpolation property 
\[\zeta^{}_\hatS(\bk)\equiv\zeta^{}_\hatU(\bk;t)\bmod \pi i\cZ[\pi i]\bbra{t} \]
for any positive index $\bk$. 
Then Komori defined $\zeta^{}_\hatS(\bk)$ for general $\bk$ to be $\zeta^{}_\hatU(\bk;t)$
modulo $\pi i\cZ[\pi i]\bbra{t}$. 

Given these extensions of $\hatA$- and $\hatS$-MZVs for general indices, 
it is natural to ask whether the refined Kaneko--Zagier conjecture is generalized to these extended values. 
In this paper, we will prove that this generalization can be actually deduced from the original one 
(the precise statement is given in Theorem \ref{thm:general KZ}): 

\begin{thm}\label{thm:Intro1}
If the refined Kaneko--Zagier conjecture concerning the $\hatA$- and $\hatS$-MZVs of positive indices is true, 
then the same is true for those values of general indices. 
\end{thm}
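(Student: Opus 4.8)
The plan is to reduce the general-index statement to the positive-index one by showing that, for every general index $\bk$, the two values $\zeta_{\hatA}(\bk)$ and $\zeta_{\hatS}(\bk)$ admit one and the same expansion: a $\Q$-linear combination of values at positive indices, together with a polynomial in a single distinguished element (the class $(p)_p=\bigl((p\bmod p^n)_p\bigr)_n$ on the $\hatA$-side, the variable $t$ on the $\hatS$-side) having positive-index coefficients. Granting such matching expansions, the $\Q$-algebra isomorphism $\varphi$ supplied by the positive-index conjecture, which satisfies $\varphi(\zeta_{\hatA}(\bk'))=\zeta_{\hatS}(\bk')$ for positive $\bk'$ and which we extend by the correspondence $(p)_p\leftrightarrow t$, then carries $\zeta_{\hatA}(\bk)$ to $\zeta_{\hatS}(\bk)$ for every general $\bk$, which is precisely the assertion of Theorem \ref{thm:general KZ}.

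On the $\hatA$-side I would obtain such an expansion by induction on the depth $r$. If an entry $k_i\le0$, then $n_i^{-k_i}$ is a polynomial in $n_i$, so summing the variable $n_i$ out of the truncated sum \eqref{eq:zeta_p} by Faulhaber's formula (equivalently, via Bernoulli polynomials) rewrites $\zeta_p(\bk)$ as an explicit $\Q$-linear combination of $\zeta_p$ at indices of depth $r-1$, the freed exponent being absorbed into the neighbouring entries. This identity passes to $\hatA$ verbatim, and iterating it strictly lowers the depth until only positive indices and the depth-one values $\zeta_{\hatA}(k)$ with $k\le0$ survive; the latter are, by the same computation, explicit $\Q$-polynomials in $(p)_p$.

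The heart of the matter is to prove that Komori's unified function $\zeta_{\hatU}$ obeys the very same recursion with the very same rational coefficients, so that $\zeta_{\hatS}(\bk)$ reduces identically. I would derive, from the structural properties of $\zeta_{\hatU}(s_1,\dots,s_r;t)$ recalled above, namely its being entire in $s_1,\dots,s_r$ together with the interpolation $\zeta_{\hatS}(\bk)\equiv\zeta_{\hatU}(\bk;t)$, a functional relation expressing $\zeta_{\hatU}$ at an index with a non-positive entry through depth $r-1$ values, carrying exactly the Bernoulli-type coefficients seen on the $\hatA$-side. This matching is the main obstacle: one cannot merely invoke agreement at positive integers, since entire functions may coincide on $\Z_{>0}$ yet differ elsewhere, so a genuine functional-equation computation for $\zeta_{\hatU}$ is required, reproducing the finite-sum manipulation term by term.

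It then remains to treat the depth-one base cases. Here I would compute $\zeta_{\hatS}(k)=\zeta_{\hatU}(k;t)$ for $k\le0$ explicitly and identify it with the polynomial in $t$ that corresponds, coefficient for coefficient, to the polynomial in $(p)_p$ giving $\zeta_{\hatA}(k)$; this direct computation is what pins down the correspondence $(p)_p\leftrightarrow t$ used to extend $\varphi$. Feeding the matched base cases into the two identical reductions, and appealing to the positive-index isomorphism for the positive part, yields $\varphi(\zeta_{\hatA}(\bk))=\zeta_{\hatS}(\bk)$ for all general $\bk$, so that the general-index conjecture follows from the positive-index one.
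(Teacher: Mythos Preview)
Your proposal is correct and follows essentially the same route as the paper: reduce to positive indices via a depth-lowering recursion based on Faulhaber's formula, established in parallel on the $\hatA$- and $\hatU$-sides (Theorems~\ref{thm:mainA} and~\ref{thm:mainU}), and then assemble the common expansion by induction on the depth (Theorem~\ref{thm:Intro2}). The specific tool the paper uses for the $\hatU$-side functional equation you flag as the main obstacle is a series representation via the Kontsevich order on $I=I_+\amalg I_-$ (Lemmas~\ref{lem:zetahatU i k} and~\ref{lem:FaulhaberU}), which lets the finite-sum Faulhaber manipulation be replayed verbatim; note also that the correspondence $\bp\leftrightarrow t$ is already part of the hypothesis of Conjecture~\ref{conj:RKZ}, so no extension of $\varphi$ is needed.
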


Indeed, this result is an immediate consequence of the following: 

\begin{thm}\label{thm:Intro2}
For any index $\bk\in\Z^r$, there is a collection $(c_{\bk,\bl}(x))_\bl$ of rational polynomials 
$c_{\bk,\bl}(x)\in\Q[x]$ indexed by a finite number of positive indices $\bl$ such that 
\[\zeta^{}_\hatA(\bk)=\sum_{\bl}c_{\bk,\bl}(\bp)\zeta^{}_\hatA(\bl)\quad 
\text{and}\quad \zeta^{}_\hatS(\bk)=\sum_{\bl}c_{\bk,\bl}(t)\zeta^{}_\hatS(\bl)\]
hold simultaneously, where we set $\bp\coloneqq\bigl((p \bmod{p^n})_{p}\bigr)_{n}\in\hatA$. 
Moreover, the depth (resp.\ the weight) of these positive indices  $\bl$ are at most 
the number (resp.\ the sum) of positive entries of $\bk$. 
\end{thm}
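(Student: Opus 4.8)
The plan is to prove Theorem~\ref{thm:Intro2} by induction on the number of non-positive entries of $\bk$, establishing a single \emph{reduction step} that simultaneously works for the $\hatA$- and $\hatS$-sides. The crucial structural observation is that both $\zeta^{}_\hatA$ and $\zeta^{}_\hatS$ are governed by the same combinatorial data: on the $\hatA$-side the variable $\bp$ plays the role that $t$ plays on the $\hatS$-side, so if I can find universal polynomial coefficients $c_{\bk,\bl}(x)$ that express $\zeta^{}_\hatA(\bk)$ in terms of $\zeta^{}_\hatA(\bl)$ with $\bp$ substituted, the very same $c_{\bk,\bl}(x)$ with $t$ substituted should express $\zeta^{}_\hatS(\bk)$ in terms of $\zeta^{}_\hatS(\bl)$. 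The strategy is therefore to locate a single non-positive entry $k_m\le 0$ in $\bk$, write down an identity that removes or "absorbs" this entry into its neighbours (producing indices with fewer non-positive entries), and verify that the identity holds verbatim on both sides once $\bp\leftrightarrow t$.

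First I would handle the $\hatA$-side by direct manipulation of the truncated sum \eqref{eq:zeta_p}. The key tool is that for a fixed entry $k_m\le 0$, the factor $n_m^{-k_m}=n_m^{|k_m|}$ is a \emph{polynomial} in $n_m$; moreover the constraint $n_{m-1}<n_m<n_{m+1}$ and the range $0<n_1<\cdots<n_r<p$ let me carry out a partial summation over $n_m$. For a non-positive power, the inner sum $\sum_{a<n_m<b}n_m^{|k_m|}$ is a polynomial in the endpoints (a Faulhaber-type expression), and crucially, summing all the way up to $p-1$ introduces the prime $p$ polynomially. After reducing modulo $p^n$ and passing to the limit in $\hatA$, these Faulhaber polynomials in $p$ become polynomials in $\bp$, which is exactly where the coefficients $c_{\bk,\bl}(x)\in\Q[x]$ and the substitution $x\mapsto\bp$ come from. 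The result is an expression of $\zeta^{}_\hatA(\bk)$ as a $\Q[\bp]$-linear combination of $\zeta^{}_\hatA(\bk')$ for indices $\bk'$ having strictly fewer non-positive entries; iterating lands on positive indices $\bl$, and the depth/weight bounds follow because each reduction step only merges entries or shortens the index, never creating a new positive entry.

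Next I would establish the matching identity on the $\hatS$-side with the \emph{same} coefficient polynomials. Rather than re-deriving everything from the power-series definition \eqref{eq:zeta_hatS}, the cleaner route is to invoke Komori's unified multiple zeta function $\zeta^{}_\hatU(s_1,\ldots,s_r;t)$, which is entire in the $s_i$ and interpolates $\zeta^{}_\hatS$ at positive indices. The plan is to prove the functional/recursive identity
\[
\zeta^{}_\hatU(\bk;t)=\sum_{\bl}c_{\bk,\bl}(t)\,\zeta^{}_\hatU(\bl;t)
\]
as an identity of functions (or formal power series in $t$), where the $c_{\bk,\bl}$ are literally the Faulhaber polynomials produced on the $\hatA$-side. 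Because both sides are entire in $s_1,\ldots,s_r$, it suffices to check the identity on a Zariski-dense set, e.g.\ at all positive integer arguments, where the same partial-summation combinatorics that governed the finite sum \eqref{eq:zeta_p} should reappear as a relation among (regularized) MZVs; analytic continuation then upgrades it to general $\bk$. Reducing modulo $\pi i\cZ[\pi i]\bbra{t}$ converts this into the desired identity for $\zeta^{}_\hatS$.

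The main obstacle I anticipate is twofold and both parts live on the $\hatS$/$\hatU$-side. First, I must ensure that the \emph{exact same} rational polynomials $c_{\bk,\bl}$ appear on both sides; this is not automatic, because the $\hatA$-computation naturally produces Bernoulli-number coefficients through Faulhaber's formula and I must check that Komori's analytic continuation produces those identical coefficients rather than merely analogous ones. The safest way to guarantee coincidence is to derive \emph{one} universal algebraic identity at the level of generating series or the harmonic-product structure, and then specialize it to each realization ($\bp$ for $\hatA$, $t$ for $\hatU$). Second, I expect delicate bookkeeping when the non-positive entry sits at the boundary (e.g.\ $k_r\le 0$, or several consecutive non-positive entries), where regularization and the interpolation property must be handled carefully; verifying that the reduction respects the mod $\pi i\cZ[\pi i]\bbra{t}$ quotient, and that the weight/depth bounds survive every edge case, will require the most care.
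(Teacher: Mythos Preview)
Your overall strategy---Faulhaber's formula for the partial sum over a non-positive entry on the $\hatA$-side, a parallel identity for $\zeta^{}_\hatU$ with the \emph{same} Bernoulli coefficients on the $\hatS$-side, and induction---is exactly the paper's approach (Theorems~\ref{thm:mainA} and~\ref{thm:mainU} followed by the proof of Theorem~\ref{thm:Intro2}). Two points in your plan, however, need correction.

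First, the induction should be on the \emph{depth} $r$, not on the number of non-positive entries. A single application of the reduction formula at $k_i=-k$ replaces a neighbouring entry $k_{i\pm 1}$ by $k_{i\pm 1}-k-1+j$, which may well be non-positive even when $k_{i\pm 1}>0$; so the count of non-positive entries need not drop. The depth, on the other hand, decreases by exactly one at each step, and the paper's induction is organized accordingly. Second, your proposed mechanism on the $\hatU$-side (``check the identity at all positive integer arguments and analytically continue'') is confused: the identity you want has $s_i=-k$ fixed, so it cannot be verified by specializing all coordinates to positive integers, and at $s_i=-k$ the defining series diverges, so the partial-summation combinatorics does not apply naively. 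The paper resolves this by rewriting $\zeta^{}_\hatU$ via the Kontsevich order so that the inner sum over the $i$-th variable becomes a function $F_a^b(s_i)$, proving (Lemma~\ref{lem:zetahatU i k}) that this series expression remains valid for $\Re(s_j)$ large ($j\ne i$) and $s_i=-k$, and then computing $F_a^b(-k)$ directly from the special values $\zeta_H(-k,a)=-B_{k+1}(a)/(k+1)$ of the Hurwitz zeta function (Lemma~\ref{lem:FaulhaberU}). This yields literally the Faulhaber expression $\frac{1}{k+1}\sum_j\binom{k+1}{j}B_j\bigl((-1)^j b^{k+1-j}-a^{k+1-j}\bigr)$, so the coefficients $c_{\bk,\bl}(x)$ coincide with those on the $\hatA$-side automatically, dissolving the first ``obstacle'' you anticipated.
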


For example, we have 
\begin{align*}
\zeta^{}_{\hatA}(k,-1)
&=\frac{1}{2}\Bigl((\bp^2-\bp)\zeta^{}_{\hatA}(k)-\zeta^{}_{\hatA}(k-1)-\zeta^{}_{\hatA}(k-2)\Bigr),\\
\zeta^{}_{\hatS}(k,-1)
&=\frac{1}{2}\Bigl((t^2-t)\zeta^{}_{\hatS}(k)-\zeta^{}_{\hatS}(k-1)-\zeta^{}_{\hatS}(k-2)\Bigr)
\end{align*}
for any integer $k\ge 3$.  

The proof of Theorem \ref{thm:Intro2} is based on a formula which reduces the depth $r$ 
whenever $\bk$ contains a non-positive entry. 
First we state this formula for $\zeta^{}_\hatA(\bk)$. 
Let $B_n$ denote the Seki-Bernoulli number defined by $ze^z/(e^z-1)=\sum_{n=0}^{\infty}B_n z^n/n!$ and $\delta_{xy}$ denote the Kronecker delta. 

\begin{thm}\label{thm:mainA}
Let $\bk=(k_1, \dots, k_r)$ be an index of depth $r\ge 2$, 
and suppose that $k_i=-k$ for some $i\in\{1,\ldots,r\}$ and $k\ge 0$. 
\begin{enumerate}
\item If $i=1$, we have
\begin{align*}
\zeta^{}_{\hatA}(\bk)=
\frac{1}{k+1}\sum_{j=0}^{k+1}\binom{k+1}{j}B_j\Bigl((-1)^j\zeta^{}_{\hatA}(k_2-k-1+j,\ldots,k_r)&\\
-\delta_{j,k+1}\,\zeta^{}_{\hatA}(k_2,\ldots,k_r)&\Bigr).
\end{align*}
\item If $1<i<r$, we have
\begin{align*}
\zeta^{}_{\hatA}(\bk)=
\frac{1}{k+1}\sum_{j=0}^{k+1}\binom{k+1}{j}B_j\Bigl((-1)^j\zeta^{}_{\hatA}&(k_1,\dots,k_{i+1}-k-1+j,\dots,k_r)\\
\qquad-\zeta^{}_{\hatA}&(k_1,\dots,k_{i-1}-k-1+j,\dots,k_r)\Bigr). 
\end{align*}
\item If $i=r$, we have
\begin{align*}
\zeta^{}_{\hatA}(\bk)=
\frac{1}{k+1}\sum_{j=0}^{k+1}\binom{k+1}{j}B_j\Bigl((-1)^j\zeta^{}_{\hatA}&(k_1,\dots,k_{r-1}) \bp^{k+1-j}\\
-\zeta^{}_{\hatA}&(k_1,\dots,k_{r-1}-k-1+j)\Bigr). 
\end{align*}
\end{enumerate}
\end{thm}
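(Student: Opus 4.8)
The plan is to prove, for each prime $p$, the \emph{exact} rational identity obtained from (i)--(iii) by replacing every $\zeta_{\hatA}$ with $\zeta_p$ and every $\bp$ with $p$, and then to descend to $\hatA$. Since each $n_i<p$ is invertible modulo $p$, every term $1/(n_1^{k_1}\cdots n_r^{k_r})$ lies in $\Z_{(p)}$ regardless of the signs of the $k_i$, so $\zeta_p(\bk)\in\Z_{(p)}$ and the proposed identities are honest equalities in $\Z_{(p)}\subset\Q$. The coefficients $\frac{1}{k+1}\binom{k+1}{j}B_j$ have denominators bounded independently of $p$, hence are $p$-integral for all but finitely many $p$; reducing mod $p^n$ and assembling over all $p$ and $n$, the finitely many excluded primes vanish in the quotient by $\bigoplus_p\Z/p^n\Z$ appearing in the definition of $\hatA$, and the assignment $p\mapsto\bp$ transports the family of $p$-local identities to the claimed identity in $\hatA$. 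Thus it suffices to establish the $p$-local versions.

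The engine is the power-sum (Faulhaber) formula. Writing out $\zeta_p(\bk)$ and summing over the single variable $n_i$ whose exponent $-k_i=k\ge 0$ is now a non-negative power, the inner sum becomes a genuine power sum $\sum_{a\le n_i<b}n_i^{\,k}$, where the endpoints depend only on the position $i$: for $i=1$ one has $0<n_1<n_2$, i.e.\ $a=1,\ b=n_2$; for $1<i<r$ one has $n_{i-1}<n_i<n_{i+1}$, i.e.\ $a=n_{i-1}+1,\ b=n_{i+1}$; and for $i=r$ one has $n_{r-1}<n_r<p$, i.e.\ $a=n_{r-1}+1,\ b=p$. I would then invoke the Bernoulli-polynomial form $\sum_{a\le n<b}n^{\,k}=\bigl(B_{k+1}(b)-B_{k+1}(a)\bigr)/(k+1)$, valid for all $k\ge 0$, which expresses the inner sum as a difference of two degree-$(k+1)$ polynomials in the neighbouring summation variables (or in $p$, or a constant). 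Substituting this back and resumming over the remaining variables is what drops the depth by one.

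The two endpoints produce the two shapes of the answer once the Seki--Bernoulli convention is matched. The upper endpoint contributes $B_{k+1}(b)/(k+1)$, which with the paper's convention $B_1=+1/2$ expands as $\frac{1}{k+1}\sum_{j=0}^{k+1}\binom{k+1}{j}(-1)^jB_j\,b^{\,k+1-j}$; here the factor $b^{\,k+1-j}$ either shifts a neighbouring exponent, $k_{i+1}\mapsto k_{i+1}-k-1+j$, giving the term $(-1)^j\zeta_p(\ldots,k_{i+1}-k-1+j,\ldots)$, or (when $b=p$ in case $i=r$) multiplies $\zeta_p(k_1,\ldots,k_{r-1})$ by $(-1)^j p^{\,k+1-j}$, which becomes $(-1)^j\bp^{\,k+1-j}$ after descent. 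The lower endpoint of the form $a=(\cdot)+1$ contributes $-B_{k+1}((\cdot)+1)/(k+1)$, and the identity $B_{k+1}(x+1)=\sum_{j=0}^{k+1}\binom{k+1}{j}B_j\,x^{\,k+1-j}$ (no sign, same convention) yields the complementary term $-\zeta_p(\ldots,k_{i-1}-k-1+j,\ldots)$. In case $i=1$ the lower endpoint is instead the constant $B_{k+1}(1)/(k+1)=B_{k+1}/(k+1)$, which by the same identity collapses to its single $j=k+1$ term and so produces exactly the Kronecker-delta contribution $-\delta_{j,k+1}\,\zeta_p(k_2,\ldots,k_r)$. Reassembling the three positions reproduces (i)--(iii) term by term.

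The calculation itself is mechanical once the tool is fixed; the genuine care lies in the Bernoulli bookkeeping that makes exactly one of the two endpoints carry the sign $(-1)^j$ and forces the summation range up to $j=k+1$ rather than $j=k$. Concretely, the delicate point is that the upper endpoint calls for the standard Bernoulli polynomial $B_{k+1}(x)=\sum_{j}\binom{k+1}{j}(-1)^jB_j\,x^{\,k+1-j}$, whereas the lower endpoint, being of the form $(\cdot)+1$, calls for the shifted expansion $B_{k+1}(x+1)=\sum_{j}\binom{k+1}{j}B_j\,x^{\,k+1-j}$; the two differ precisely in the $j=1$ term (and, at the top, interact with the $j=k+1$ term), and it is exactly this asymmetry that accounts for the stated formula and, when $i=1$, isolates the $\delta_{j,k+1}$ term. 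I expect the one step demanding real attention to be the verification of these two expansions directly against the generating function $ze^z/(e^z-1)$ defining the $B_n$, since an error there would spoil the signs throughout.
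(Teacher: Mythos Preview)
Your approach is correct and essentially identical to the paper's: both reduce to the $p$-local identity for $\zeta_p(\bk)$ by evaluating the inner power sum $\sum_{a<n_i<b}n_i^{\,k}$ via Faulhaber's formula, then read off the three cases from the boundary values and descend to $\hatA$. The only cosmetic difference is that the paper first packages the power-sum identity as a single lemma $\sum_{a<n<b}n^k=\tfrac{1}{k+1}\sum_{j=0}^{k+1}\binom{k+1}{j}B_j\bigl((-1)^j b^{\,k+1-j}-a^{\,k+1-j}\bigr)$ (with the convention $0^0=1$), rather than expanding the two Bernoulli polynomials $B_{k+1}(b)$ and $B_{k+1}(a+1)$ separately as you do.
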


The corresponding formula for $\zeta^{}_\hatS(\bk)$ holds, in fact, at the level of the complex analytic function. 
See \S\ref{sec:hatU} for the definition of the unified multiple zeta function $\zeta^{}_\hatU(\bs;t_+,t_-)$. 

\begin{thm}\label{thm:mainU}
Let $\bs=(s_1, \dots, s_r)$ be an $r$-tuple of complex numbers with $r\ge 2$, 
and suppose that $s_i=-k$ for some $i\in\{1,\ldots,r\}$ and an integer $k\ge 0$. 
\begin{enumerate}
\item If $i=1$, we have
\begin{align*}
\zeta_\hatU(\bs;t_+,t_-)=
\frac{1}{k+1}\sum_{j=0}^{k+1}\binom{k+1}{j}B_j\Bigl((-1)^j\zeta_\hatU&(s_2-k-1+j,\dots,s_r;t_+,t_-)\\
-\zeta_\hatU&(s_2,\dots,s_r;t_+,t_-)\,t_+^{k+1-j}\Bigr). 
\end{align*}
\item If $1<i<r$, we have
\begin{align*}
\zeta_\hatU(\bs;t_+,t_-)=
\frac{1}{k+1}\sum_{j=0}^{k+1}\binom{k+1}{j}B_j\Bigl((-1)^j\zeta_\hatU&(s_1,\dots,s_{i+1}-k-1+j,\dots,s_r;t_+,t_-)\\
-\zeta_\hatU&(s_1,\dots,s_{i-1}-k-1+j,\dots,s_r;t_+,t_-)\Bigr). 
\end{align*}
\item If $i=r$, we have
\begin{align*}
\zeta_\hatU(\bs;t_+,t_-)=
\frac{1}{k+1}\sum_{j=0}^{k+1}\binom{k+1}{j}B_j\Bigl((-1)^j\zeta_\hatU&(s_1,\dots,s_{r-1};t_+,t_-)\,t_-^{k+1-j}\\
-\zeta_\hatU&(s_1,\dots,s_{r-1}-k-1+j;t_+,t_-)\Bigr). 
\end{align*}
\end{enumerate}
\end{thm}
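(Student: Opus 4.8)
The plan is to reduce each of the three formulae to one computation in the $i$-th variable and then to spread the resulting identity over all of $\C^{r-1}$ by analytic continuation. I fix the integer $k\ge 0$ once and for all and regard both sides of the asserted identity as functions of the remaining variables $(s_j)_{j\ne i}$, keeping $t_+,t_-$ as parameters; thus $s_i=-k$ stays frozen and no continuation in $s_i$ is needed. By the construction recalled in \S\ref{sec:hatU}, $\zeta_\hatU(\bs;t_+,t_-)$ is entire in $\bs$, so every depth-$(r-1)$ term on the right-hand side is entire in $(s_j)_{j\ne i}$ as well. By the identity theorem for holomorphic functions of several variables it therefore suffices to prove the identity on a nonempty open set $\Omega\subset\C^{r-1}$, for which I take a product of right half-planes $\Re(s_j)\gg 0$ on which the defining representation of $\zeta_\hatU$ converges absolutely.

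The heart of the proof is the behaviour of the one-variable building block occupying the $i$-th slot. Its value at the non-positive integer $s_i=-k$ is governed by the value at $-k$ of a Hurwitz/Lerch-type one-variable zeta function, which is, up to sign, a Bernoulli polynomial; in the normalization $ze^z/(e^z-1)=\sum_n B_nz^n/n!$ fixed in the statement this polynomial is $\tfrac{1}{k+1}\sum_{j=0}^{k+1}\binom{k+1}{j}B_j\,a^{k+1-j}$, where $a$ records the relevant limit of the $i$-th variable. Since that variable runs between a lower and an upper limit, the block evaluates to the difference
\[
\frac{1}{k+1}\sum_{j=0}^{k+1}\binom{k+1}{j}B_j\Bigl((-1)^j(\text{upper})^{k+1-j}-(\text{lower})^{k+1-j}\Bigr),
\]
precisely as in the Faulhaber identity $\sum_{m=\mathrm{low}+1}^{\mathrm{up}-1}m^k=\sum_{m=1}^{\mathrm{up}-1}m^k-\sum_{m=1}^{\mathrm{low}}m^k$. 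The sign $(-1)^j$ on the upper term is simply the switch $B_j^{-}=(-1)^jB_j$ between the classical ($B_1=-1/2$) and Seki ($B_1=+1/2$) conventions, reflecting that the upper limit enters through an exclusive sum $\sum_{m=1}^{\mathrm{up}-1}$ and the lower through an inclusive one $\sum_{m=1}^{\mathrm{low}}$.

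It then remains only to read off ``upper'' and ``lower'' in each case. When the slot is internal ($1<i<r$) both limits are the neighbouring analytic variables, so each power $(\cdot)^{k+1-j}$ is absorbed into the adjacent factor and shifts its exponent, producing the terms $\zeta_\hatU(\dots,s_{i+1}-k-1+j,\dots)$ and $\zeta_\hatU(\dots,s_{i-1}-k-1+j,\dots)$ of case (ii). When the slot is at the left end ($i=1$) the lower limit is the boundary and contributes the factor $t_+^{k+1-j}$, while the upper limit is the neighbour $s_2$; this gives case (i). Symmetrically, at the right end ($i=r$) the upper limit is the boundary and contributes $t_-^{k+1-j}$ while the lower limit is the neighbour $s_{r-1}$, giving case (iii). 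In short, the two boundary parameters $t_+$ and $t_-$ are exactly the left and right endpoints seen by the $i$-th variable, and the relative minus sign between the two terms is the $\sum_{\mathrm{up}}-\sum_{\mathrm{low}}$ of the Faulhaber difference.

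The step I expect to be the main obstacle is making this single-variable extraction rigorous inside the analytically continued definition of $\zeta_\hatU$. At $s_i=-k$ the naive inner series diverges, so the Bernoulli polynomial must be produced as a regularized (Hurwitz) value---presumably as the residue of a Gamma factor in a Mellin--Barnes representation---rather than as an honest finite sum, and one must verify that this regularization commutes with the remaining integrations on $\Omega$. The most delicate bookkeeping is the top term $j=k+1$: in the $\hatA$-picture of Theorem \ref{thm:mainA} this is the $\delta_{j,k+1}$-correction forced by the $0^0$ ambiguity, whereas here it appears cleanly as the constant term of the Bernoulli polynomial, that is, as the $t_\pm^{0}$ contribution at the endpoints. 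Once the signs and the Seki-versus-classical normalization are pinned down uniformly across the three cases, the analytic-continuation step of the first paragraph closes the argument with no further work.
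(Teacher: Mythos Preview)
Your plan is the same as the paper's in its essentials: isolate the $i$-th slot, replace it by a Faulhaber/Bernoulli difference in the two neighbouring limits, and then continue analytically in the remaining variables. Where you diverge is in the handling of your ``main obstacle.'' You anticipate having to produce the Bernoulli polynomial as a regularized value through a Mellin--Barnes representation and then justify interchanging that regularization with the outer summations. The paper avoids this entirely by working with the Kontsevich order on $I=I_+\amalg I_-$: one first packages the inner sum over $a_i$ as the single function
\[
F_{a_{i-1}}^{a_{i+1}}(s_i)=\sum_{a_{i-1}\prec c\prec a_{i+1}}c^{-s_i},
\]
observes that it is \emph{entire} in $s_i$ (it is either a finite sum, or a combination $\zeta_H(s_i,\cdot)+(-1)^{s_i}\zeta_H(s_i,\cdot)$ whose simple poles at $s_i=1$ cancel), and then proves directly that the resulting $(r-1)$-fold series over $a_1,\ldots,a_{i-1},a_{i+1},\ldots,a_r$ converges absolutely on the asymmetric domain $\Re(s_i)>-C$, $\Re(s_j)>C+2$ for $j\ne i$. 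On that domain one may simply set $s_i=-k$ term by term, and the Hurwitz special value $\zeta_H(-k,a)=-B_{k+1}(a)/(k+1)$ gives the Faulhaber expression for $F_{a}^{b}(-k)$ uniformly in all three configurations of $a,b$ (both in $I_+$, both in $I_-$, or straddling). The endpoint cases $i=1$ and $i=r$ then fall out automatically from $a_0=t_+$ and $a_{r+1}=t_-$, with no separate $\delta_{j,k+1}$ bookkeeping: the $j=k+1$ term is just the constant term of the Bernoulli polynomial, exactly as you guessed.

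So your outline is correct, but the paper's route through the entire function $F_a^b$ and the asymmetric convergence domain is both shorter and more robust than the Mellin--Barnes regularization you were bracing for; in particular it makes the commutation of ``evaluate at $s_i=-k$'' with ``sum over the other variables'' a genuine absolute-convergence statement rather than an interchange of limits.
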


From Theorem \ref{thm:mainU}, we obtain the following: 
\begin{cor}\label{cor:Z[pi i]}
For any $r \in \Z_{\ge0}$ and $\bk \in \Z^r$, 
we have $\zeta^{}_{\hatU}(\bk; t_+, t_-) \in \cZ[\pi i]\bbra{t_+, t_-}$.
\end{cor}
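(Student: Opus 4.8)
The plan is to establish the membership by strong induction on the depth $r$, using Theorem~\ref{thm:mainU} as the mechanism for removing non-positive entries. The base cases consist of exactly those indices to which Theorem~\ref{thm:mainU} does not apply, namely the positive indices of arbitrary depth together with all indices of depth at most $1$. For these I would invoke the construction of $\zeta^{}_{\hatU}$ in \S\ref{sec:hatU}: for a positive index the coefficients of $\zeta^{}_{\hatU}(\bk;t_+,t_-)$ are expressed through harmonic regularized MZVs, which lie in $\cZ$, and powers of $\pi i$, while in depth $\le 1$ the function is explicit; in either situation the value lies in $\cZ[\pi i]\bbra{t_+,t_-}$. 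The empty index gives $\zeta^{}_{\hatU}(\varnothing)=1$, so the case $r=0$ is immediate.

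For the inductive step, fix $r\ge 2$ and assume $\zeta^{}_{\hatU}(\bl;t_+,t_-)\in\cZ[\pi i]\bbra{t_+,t_-}$ for every integer index $\bl$ of depth strictly less than $r$. Given $\bk=(k_1,\dots,k_r)\in\Z^r$, if $\bk$ is positive we are in a base case; otherwise we may pick $i$ with $k_i=-k\le 0$ and apply the corresponding case (i)--(iii) of Theorem~\ref{thm:mainU} with $\bs=\bk$. In each case the right-hand side is a finite sum of terms of the form $\tfrac{(-1)^j}{k+1}\binom{k+1}{j}B_j\,\zeta^{}_{\hatU}(\bl;t_+,t_-)$ and $\tfrac{-1}{k+1}\binom{k+1}{j}B_j\,\zeta^{}_{\hatU}(\bl;t_+,t_-)\,t_\pm^{\,k+1-j}$, where each $\bl$ is an integer index of depth $r-1$. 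Since the Seki--Bernoulli numbers are rational, every scalar coefficient lies in $\Q$ and every $t_\pm$-factor is a monomial with non-negative exponent. As $\cZ[\pi i]\bbra{t_+,t_-}$ is a ring containing $\Q$, $t_+$ and $t_-$, and the induction hypothesis places each $\zeta^{}_{\hatU}(\bl;t_+,t_-)$ inside it, the entire right-hand side lies in $\cZ[\pi i]\bbra{t_+,t_-}$, and hence so does $\zeta^{}_{\hatU}(\bk;t_+,t_-)$.

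The only genuine input beyond Theorem~\ref{thm:mainU} is the verification of the base cases in \S\ref{sec:hatU}; the inductive machinery itself is routine. The point that makes the induction work---and that I would emphasize---is that the indices $\bl$ produced by Theorem~\ref{thm:mainU} may again contain non-positive entries, so the induction must range over \emph{all} integer indices of lower depth rather than over positive ones alone. Because each application of Theorem~\ref{thm:mainU} strictly lowers the depth, the recursion terminates after finitely many steps, and no convergence or well-definedness issue arises since we work throughout inside the formal power series ring $\cZ[\pi i]\bbra{t_+,t_-}$.
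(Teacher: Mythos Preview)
Your argument is correct and matches the paper's intended proof: Corollary~\ref{cor:Z[pi i]} is stated as an immediate consequence of Theorem~\ref{thm:mainU}, and the natural unpacking is precisely the induction on depth you give, with positive indices and depth $\le 1$ serving as base cases. One clarification on attribution: the positive-index base case $\zeta^{}_{\hatU}(\bk;t_+,t_-)\in\cZ[\pi i]\bbra{t_+,t_-}$ is not actually established in \S\ref{sec:hatU} of this paper but is taken from Komori~\cite{Ko}, so that is what you should invoke rather than ``the construction in \S\ref{sec:hatU}.''
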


The contents of this paper is as follows. 
In \S\ref{sec:hatA}, we prove Theorem \ref{thm:mainA} by using Faulhaber's formula on power sums. 
In \S\ref{sec:hatU}, we prove Theorem \ref{thm:mainU} along the same line as in the previous section, 
by considering (a slight modification of) the order on non-zero integers due to Kontsevich. 
In \S\ref{sec:KZconj}, we explain the Kaneko--Zagier conjecture and its refinement, 
and present the precise version of Theorem \ref{thm:Intro1}.
Finally, in \S\ref{sec:values}, we discuss the values of $\zeta^{}_{\hatU}(\bs; t_+, t_-)$ at non-positive integer points as an application of our results.

\section{Main theorem for $\zeta_\hatA$}\label{sec:hatA}

In this section, we prove Theorem \ref{thm:mainA}. 

\begin{lem} \label{lem:FaulhaberA}
For any non-negative integers $k$ and $a<b$, we have
\begin{equation}\label{eq:FaulhaberA}
\sum_{a<n<b}n^k=\frac{1}{k+1}\sum_{j=0}^{k+1}\binom{k+1}{j}B_j\bigl\{(-1)^{j}b^{k+1-j}-a^{k+1-j}\bigr\}.
\end{equation}
Here we use the convention that $a^{k+1-j}=1$ for $a=0$ and $j=k+1$. 
\end{lem}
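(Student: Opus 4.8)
The plan is to prove \eqref{eq:FaulhaberA} by comparing exponential generating functions in an auxiliary variable $z$. First I would package the left-hand side: since $\sum_{a<n<b}n^k$ is, by definition, the coefficient of $z^k/k!$ in $\sum_{a<n<b}e^{nz}$, and the latter is a finite geometric sum, I would record the closed form
\[
\sum_{a<n<b}e^{nz}=\sum_{n=a+1}^{b-1}e^{nz}=\frac{e^{bz}-e^{(a+1)z}}{e^z-1}.
\]
The goal is then to show that the right-hand side of \eqref{eq:FaulhaberA}, viewed as a sequence in $k$, has exactly the same exponential generating function.

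Next I would treat the two inner sums on the right of \eqref{eq:FaulhaberA} separately. Writing $R_k$ for the right-hand side of \eqref{eq:FaulhaberA} and reindexing with $m=k+1$, the factor $1/(k+1)$ together with $z^k/k!$ becomes $z^{m-1}/m!$, so $\sum_{k\ge0}R_k z^k/k!$ splits as a difference of two series of the shape $\tfrac1z\sum_{m\ge1}\bigl(\sum_{j=0}^m\binom{m}{j}\cdots\bigr)z^m/m!$. Each inner sum is a binomial convolution, so its exponential generating function factors as a product. For the $b$-term I would use that $\sum_{j\ge0}(-1)^jB_j z^j/j!=z/(e^z-1)$ (obtained from the defining generating function of $B_n$ by substituting $-z$), so that convolution with $e^{bz}$ produces $ze^{bz}/(e^z-1)$; for the $a$-term I would use the defining generating function $\sum_{j\ge0}B_j z^j/j!=ze^z/(e^z-1)$ directly, so that convolution with $e^{az}$ produces $ze^{(a+1)z}/(e^z-1)$. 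Subtracting the constant terms (the $m=0$ contributions, both equal to $1$) and dividing by $z$, the two pieces become $e^{bz}/(e^z-1)-1/z$ and $e^{(a+1)z}/(e^z-1)-1/z$; their difference is precisely the closed form above. Matching coefficients of $z^k/k!$ then yields \eqref{eq:FaulhaberA}.

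The computation is essentially bookkeeping, and the one point deserving care is the Bernoulli-number sign convention: the paper's $B_n$ satisfy $B_1=1/2$, whereas the upper endpoint naturally calls for the reflected numbers $(-1)^jB_j$ (so that the $j=1$ term carries $-1/2$ rather than $+1/2$). The substitution $z\mapsto -z$ above is exactly what reconciles the $(-1)^j$ in the $b$-sum with the plain $B_j$ in the $a$-sum, and it accounts for the asymmetry of \eqref{eq:FaulhaberA}. The remaining subtlety is the degenerate case $a=0$: there $e^{az}$ collapses to the constant $1=e^{0\cdot z}$, which matches the stated convention $a^{k+1-j}=1$ for $a=0$ and $j=k+1$ (only the top term of the $a$-sum survives). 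As a cross-check, one can instead derive \eqref{eq:FaulhaberA} from the Bernoulli polynomials $B_n(x)$, using the telescoping relation $B_{k+1}(x+1)-B_{k+1}(x)=(k+1)x^k$ to get $\sum_{a<n<b}n^k=\bigl(B_{k+1}(b)-B_{k+1}(a+1)\bigr)/(k+1)$, and then expanding $B_{k+1}(b)$ about $0$ and $B_{k+1}(a+1)$ via the Appell shift; this reproduces the same two sums and confirms the answer.
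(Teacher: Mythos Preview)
Your argument is correct but takes a different route from the paper. The paper simply quotes Faulhaber's formula $\sum_{0<n\le b}n^k=\frac{1}{k+1}\sum_{j=0}^{k}\binom{k+1}{j}B_j\,b^{k+1-j}$, subtracts the corresponding sum up to $a$, and then observes that inserting the factor $(-1)^j$ into the $b$-part affects only the $j=1$ term (since $B_j=0$ for odd $j\ge 3$); with $B_1=1/2$ this contributes exactly the $-b^k$ needed to exclude $n=b$ from the range. You instead package both sides as exponential generating functions in $z$ and match them via the closed form $(e^{bz}-e^{(a+1)z})/(e^z-1)$, using the two Bernoulli generating series $ze^z/(e^z-1)$ and $z/(e^z-1)$ for the $a$- and $b$-pieces respectively. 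The paper's route is a two-line reduction to a standard identity and isolates the single ``sign trick'' that makes the formula asymmetric; your approach is more self-contained (it essentially re-derives Faulhaber's formula rather than citing it) and makes the pairing of the two generating functions explicit, a viewpoint that also meshes well with the generating-function manipulations in \S\ref{sec:values}.
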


\begin{proof}
From Faulhaber's formula 
\[\sum_{0<n\le b}n^k=\frac{1}{k+1}\sum_{j=0}^{k}\binom{k+1}{j}B_j\cdot b^{k+1-j},\]
we see that 
\begin{align*}
\sum_{a<n<b}n^k
&=-b^k+\frac{1}{k+1}\sum_{j=0}^{k}\binom{k+1}{j}B_j\bigl\{b^{k+1-j}-a^{k+1-j}\bigr\}\\
&=-b^k+\frac{1}{k+1}\sum_{j=0}^{k+1}\binom{k+1}{j}B_j\bigl\{b^{k+1-j}-a^{k+1-j}\bigr\}. 
\end{align*}
Then the desired expression follows, since $B_1=1/2$ and $B_j=0$ for any odd integer $j\ge3$. 
\end{proof}

\begin{proof}[Proof of Theorem \ref{thm:mainA}]
By substituting \eqref{eq:FaulhaberA} into the definition of $\zeta_p(\bk)$, 
one sees that $\zeta_p(\bk)$ satisfies the formula of the same form as given in Theorem \ref{thm:mainA}: 
For example, 
\begin{align*}
\zeta_p(k_1,-k,k_3)&=\sum_{0<n_1<n_2<n_3<p}\frac{n_2^k}{n_1^{k_1}n_3^{k_3}}\\
&=\sum_{0<n_1<n_3<p}\frac{1}{n_1^{k_1}n_3^{k_3}}\cdot
\frac{1}{k+1}\sum_{j=0}^{k+1}\binom{k+1}{j}B_j\bigl\{(-1)^j n_3^{k+1-j}-n_1^{k+1-j}\bigr\}\\
&=\frac{1}{k+1}\sum_{j=0}^{k+1}\binom{k+1}{j}B_j\bigl\{(-1)^j\zeta_p(k_1,k_3-k-1+j)-\zeta_p(k_1-k-1+j,k_3)\bigr\}. 
\end{align*}
Then Theorem \ref{thm:mainA} follows immediately. 
\end{proof}

\begin{rem}\label{rem:hatA a,b}
Given integers $a_p,b_p$ with $0\le a_p\le b_p\le p$ for each prime $p$, 
we may define a generalization of $\zeta^{}_\hatA(\bk)$ by 
\[\zeta^{}_{\hatA}(\bk; a_\bp, b_\bp)\coloneqq
\Biggl(\biggr(\sum_{a_p<n_1<\dots<n_r<b_p}\frac{1}{n^{k_1}_1\dots n^{k_r}_r}\bmod{p^n}\biggr)_p\Biggr)_n,\]
where $a_\bp$ and $b_\bp$ stand for $((a_p\bmod p^n)_p)_n,((b_p\bmod p^n)_p)_n\in\hatA$, respectively. 
Theorem \ref{thm:mainA} has a natural generalization to these values $\zeta^{}_{\hatA}(\bk; a_\bp, b_\bp)$, 
and the similarity to Theorem \ref{thm:mainU} becomes more apparent. 
For example, the formula for $i=1$ is
\begin{align*}
\zeta^{}_{\hatA}(\bk; a_\bp, b_\bp)=
\frac{1}{k+1}\sum_{j=0}^{k+1}\binom{k+1}{j}B_j\Bigl((-1)^j\zeta^{}_{\hatA}(k_2-k-1+j,\ldots,k_r; a_\bp, b_\bp)&\\
-\zeta^{}_{\hatA}(k_2,\ldots,k_r; a_\bp, b_\bp)\,a_\bp^{k+1-j}&\Bigr).  
\end{align*}
\end{rem}

\section{Main theorem for $\zeta_\hatU$}\label{sec:hatU}

In this section, we prove Theorem \ref{thm:mainU}. 
First we recall the definition of the unified multiple zeta function introduced by Komori \cite{Ko}. 
For $t\in\C\setminus[1,\infty)$, we define a version of multiple zeta function of Hurwitz type by 
\begin{equation}\label{eq:Hurwitz type}
\zeta(s_1,\ldots,s_r;t)\coloneqq\sum_{0<n_1<\cdots<n_r}
\frac{1}{(n_1-t)^{s_1}\cdots(n_r-t)^{s_r}}.  
\end{equation}
Here the power $(n-t)^s$ is defined by using the principal value of the logarithm $\log(n-t)$. 
This series \eqref{eq:Hurwitz type} converges absolutely and uniformly on compact sets of the region 
\[\Re(s_j)+\cdots+\Re(s_r)>r+1-j \quad (j=1,\ldots,r), \]
and meromorphically continued to $(s_1,\ldots,s_r)\in\C^r$ \cite[Theorems 3 and 4]{Ma}. 
For $t_+\in\C\setminus(-\infty,-1]$ and $t_-\in\C\setminus[1,\infty)$, we define 
\begin{equation}\label{eq:zeta_hatU}
\zeta^{}_\hatU(s_1,\ldots,s_r;t_+,t_-)\coloneqq \sum_{i=0}^r(-1)^{s_{i+1}+\cdots+s_r}
\zeta(s_1,\ldots,s_i;-t_+)\,\zeta(s_r,\ldots,s_{i+1};t_-), 
\end{equation}
where $(-1)^s=e^{\pi is}$ for any $s\in\C$ (note that we have slightly changed the definition from that  
given in \cite[p.~225]{Ko}; Komori's $(t_1,t_2)$ is our $(-t_+,t_-)$). 
The function $\zeta^{}_\hatU(s_1,\ldots,s_r;t)$ mentioned in \S\ref{sec:Intro} is defined as 
\[\zeta^{}_\hatU(s_1,\ldots,s_r;t)\coloneqq \zeta^{}_\hatU(s_1,\ldots,s_r;0,t). \]

In the following, we often abbreviate the pair of $t_+$ and $t_-$ as $t_\pm$.  
In \cite{Ko}, it is proved that $\zeta^{}_\hatU(\bs;t_\pm)$ is extended to a holomorphic function on 
\[(s_1,\ldots,s_r,t_+,t_-)\in \C^r\times(\C\setminus\Z_{\le 1})\times(\C\setminus\Z_{\ge 1}).\] 
In particular, $\zeta^{}_\hatU(\bs;t_\pm)$ is an entire function on $\C^r$ for fixed $t_\pm$. 

The definition \eqref{eq:zeta_hatU} of $\zeta^{}_\hatU(\bs;t_\pm)$ can be rewritten 
by using the so-called \emph{Kontsevich order}. 
Fixing $t_\pm$, we define sets $I_+$, $I_-$ and $I$ by 
\[I_+\coloneqq\{n+t_+ \mid n \in \Z_{\ge0}\}, \quad 
I_{-}\coloneqq\{-n+t_- \mid n \in \Z_{\ge0}\}, \quad I\coloneqq I_+ \amalg I_-,\]
and introduce a total order $\prec$ on the set $I$ by 
\[t_+\prec 1+t_+\prec 2+t_+\prec\cdots\prec n+t_+\prec\cdots
\prec -n+t_-\prec\cdots\prec -2+t_-\prec -1+t_-\prec t_-.\]
Then, if $\Re(s_i)>1$ for all $i$, we may write \eqref{eq:zeta_hatU} as 
\[\zeta^{}_{\hatU}(s_1, \dots, s_r; t_\pm)
=\sum_{\substack{a_1,\ldots,a_r\in I\\ t_+ \prec a_1 \prec \dots \prec a_r \prec t_-}}
\frac{1}{a_1^{s_1} \cdots a_r^{s_r}}.\]
Here, for $a=-n+t_- \in I_-$, we understand $1/a^{s}=(-1)^s/(n-t_-)^s$.

\begin{lem}
For $a,b\in I$ with $a\prec b$, set
\[F_a^b(s)=\sum_{a\prec c\prec b}\frac{1}{c^s}.\]
Then $F_a^b(s)$ can be continued to an entire function on $\C$.
\end{lem}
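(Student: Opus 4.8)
The plan is to split into cases according to whether $a$ and $b$ lie in the same one of the two subsets $I_+,I_-$, exploiting the fact that the order $\prec$ places all of $I_+$ before all of $I_-$. First I would dispose of the easy cases. If $a$ and $b$ both lie in $I_+$, say $a=p+t_+$ and $b=q+t_+$ with $p<q$, then the set $\{c\in I:a\prec c\prec b\}$ equals the finite set $\{(p+1)+t_+,\ldots,(q-1)+t_+\}$; the same finiteness holds when $a,b\in I_-$. In either case $F_a^b(s)$ is a finite sum of terms $c^{-s}=e^{-s\log c}$, each entire in $s$, so $F_a^b$ is entire. Since $a\prec b$ with $a\in I_-$ forces $b\in I_-$, these exhaust every configuration except one.

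The remaining, essential case is $a=m+t_+\in I_+$ and $b=-k+t_-\in I_-$ (allowing $k=0$, i.e.\ $b=t_-$). Here the elements strictly between $a$ and $b$ are all $c=n+t_+$ with $n>m$ together with all $c=-n+t_-$ with $n>k$, so, using the convention $1/c^s=(-1)^s/(n-t_-)^s$ for $c=-n+t_-$, for $\Re(s)>1$ I would write
\[
F_a^b(s)=\sum_{n>m}\frac{1}{(n+t_+)^s}+(-1)^s\sum_{n>k}\frac{1}{(n-t_-)^s}.
\]
Up to subtracting the finitely many initial terms, which are entire in $s$, the two series are the depth-one Hurwitz-type zeta functions $\zeta(s;-t_+)$ and $\zeta(s;t_-)$, whose shifted arguments $m+1+t_+$ and $k+1-t_-$ avoid the non-positive reals precisely because $t_+\notin(-\infty,-1]$ and $t_-\notin[1,\infty)$. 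By the depth-one case of the continuation already quoted (equivalently, by the classical theory of the Hurwitz zeta function), each continues meromorphically to $\C$ with a single simple pole, located at $s=1$ and of residue $1$.

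Consequently $F_a^b$ continues meromorphically to $\C$ with at most a simple pole at $s=1$, and the whole matter reduces to showing this pole cancels. This is the crux, and the only step I expect to carry real content: the first series contributes residue $1$ at $s=1$, while the prefactor $(-1)^s=e^{\pi i s}$ takes the value $-1$ there, so the second summand contributes residue $(-1)\cdot 1=-1$. The two cancel, $F_a^b$ has no pole at $s=1$, and is therefore entire. The case analysis and the identification with Hurwitz-type functions are routine bookkeeping; this residue cancellation is the depth-one shadow of the mechanism that makes $\zeta^{}_\hatU(\bs;t_\pm)$ entire.
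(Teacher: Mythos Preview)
Your proof is correct and follows essentially the same approach as the paper: the same case split according to whether $a,b$ lie in $I_+$ or $I_-$, reduction of the only nontrivial case to two Hurwitz zeta functions, and cancellation of the simple poles at $s=1$. If anything, you spell out the residue cancellation (via $(-1)^s|_{s=1}=-1$) and the domain check on the Hurwitz parameters more explicitly than the paper does.
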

\begin{proof}
Let $\zeta^{}_{H}(s, a)$ denote the Hurwitz zeta function defined by 
\[\zeta^{}_H(s,a)\coloneqq\sum_{n=0}^\infty \frac{1}{(n+a)^s}.\] 
Note that it satisfies $\zeta_H(s,a)=\zeta(s;1-a)$, where the right hand side is defined by 
\eqref{eq:Hurwitz type} with $r=1$. 
It is well-known that $\zeta^{}_{H}(s, a)$ is meromorphically continued to the whole complex $s$-plane, 
and holomorphic except the pole at $s=1$ with the principal part $1/(s-1)$. 

If $a,b\in I_+$ or $a,b\in I_-$, then $F_a^b(s)$ is a finite sum and the statement is obvious. 
On the other hand, if $a=n_a+t_+\in I_+$ and $b=-n_b+t_-\in I_-$, 
then we have 
\[F^{a}_{b}(s)=\sum_{n>n_a}\frac{1}{(n+t_+)^s}+\sum_{n>n_b}\frac{(-1)^s}{(n-t_-)^s}
=\zeta_H(s,n_a+1+t_+)+(-1)^s\zeta_H(s,n_b+1-t_-).  \]
Hence this can be meromorphically continued to $\C$, and the singularity at $s=1$ is removed.  
\end{proof}

By the definition of $F^{b}_{a}(s)$, for $s_1, \dots, s_r \in \C$ satisfying $\Re(s_1),\ldots,\Re(s_r)>1$, 
we see that 
\begin{equation} \label{eq:zetahatU i}
\zeta^{}_{\hatU}(s_1, \dots, s_r; t_\pm)
=\sum_{t_+\prec a_1\prec\cdots\prec a_{i-1}\prec a_{i+1}\prec\cdots\prec a_r\prec t_-}
\frac{F_{a_{i-1}}^{a_{i+1}}(s_i)}{a_1^{s_1}\cdots a_{i-1}^{s_{i-1}} a_{i+1}^{s_{i+1}}\cdots a_r^{s_r}}. 
\end{equation}

\begin{lem}\label{lem:zetahatU i k}
Let $C$ be a positive real number. The series expression \eqref{eq:zetahatU i} holds 
for $s_1, \dots, s_r \in \C$ satisfying $\Re(s_i)>-C$ and $\Re(s_j)>C+2$ $(\forall j \neq i)$. 
In particular, if $k \in \Z_{\ge 0}$ and $s_j \in \C$ $(j \neq i)$ satisfying $\Re(s_j)>k+2$, 
we have
\begin{equation}\label{eq:zetahatU i k}
\zeta^{}_{\hatU}(s_1, \dots, s_r; t_\pm)|_{s_i=-k}
=\sum_{t_+\prec a_1\prec\cdots\prec a_{i-1}\prec a_{i+1}\prec\cdots\prec a_r\prec t_-}
\frac{F_{a_{i-1}}^{a_{i+1}}(-k)}{a_1^{s_1}\cdots a_{i-1}^{s_{i-1}} a_{i+1}^{s_{i+1}}\cdots a_r^{s_r}}. 
\end{equation}
\end{lem}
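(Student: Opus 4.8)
The goal of Lemma \ref{lem:zetahatU i k} is to justify analytic continuation: the series \eqref{eq:zetahatU i} was derived under the hypothesis $\Re(s_1),\ldots,\Re(s_r)>1$, but we want to evaluate the inner variable at $s_i=-k$, which lies far outside that region. The plan is to establish the larger region of convergence $\Re(s_i)>-C$, $\Re(s_j)>C+2$ $(j\neq i)$ first, and then specialize.

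My first step would be to show that the outer multiple sum in \eqref{eq:zetahatU i} converges absolutely and locally uniformly in this enlarged region. The key point is that the previous Lemma gives, for each pair $a_{i-1}\prec a_{i+1}$, an entire function $F_{a_{i-1}}^{a_{i+1}}(s_i)$; I would need a quantitative bound on its growth in the index variables. Concretely, I expect that $F_{a_{i-1}}^{a_{i+1}}(-k)$ grows at most polynomially — like $O(|a_{i+1}|^{k+1})$ — since it is essentially a power sum $\sum n^{k}$ over an interval, or in the analytic language a difference of Hurwitz zeta values. The main obstacle is precisely this uniform estimate: one must control $F_{a_{i-1}}^{a_{i+1}}(s_i)$ uniformly for $\Re(s_i)>-C$ and simultaneously for all admissible index pairs, so that multiplying by the factor $1/(a_1^{s_1}\cdots a_r^{s_r})$ (missing the $i$-th slot) still leaves an absolutely convergent sum. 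The condition $\Re(s_j)>C+2$ is tuned so that the extra polynomial growth of order roughly $C$ coming from $F$ is absorbed by the decay of the neighboring factors $a_{i-1}^{-s_{i-1}}$ and $a_{i+1}^{-s_{i+1}}$, leaving enough room (the ``$+2$'') for convergence of a double tail. I would make this precise by splitting according to whether $a_{i-1},a_{i+1}$ lie in $I_+$ or $I_-$ and using the Hurwitz-zeta expression from the previous Lemma to bound $F_{a_{i-1}}^{a_{i+1}}(s_i)$.

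Once absolute convergence in the enlarged region is in hand, the identity \eqref{eq:zetahatU i} extends there by analytic continuation: both sides are holomorphic on the enlarged domain (the left side is entire in $s_i$ by Komori's result, and the right side is holomorphic by the convergence just established), and they agree on the nonempty open subset $\Re(s_1),\ldots,\Re(s_r)>C+2$ where the original derivation applies. Since this subset has an accumulation point and the domain is connected, the two holomorphic functions coincide throughout, proving the first assertion. The ``In particular'' statement \eqref{eq:zetahatU i k} is then the specialization $s_i=-k$: choosing $C=k$, the hypotheses $\Re(s_j)>k+2$ place $(s_1,\dots,s_r)|_{s_i=-k}$ inside the enlarged region, so we may substitute $s_i=-k$ directly, using $F_{a_{i-1}}^{a_{i+1}}(-k)$ which is well-defined as the entire function evaluated at $-k$.

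I expect the analytic-continuation and specialization steps to be routine; the genuine work is the uniform growth estimate for $F_{a_{i-1}}^{a_{i+1}}(s_i)$ across all index pairs, which is what forces the precise shape of the convergence region stated in the lemma.
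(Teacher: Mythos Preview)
Your overall architecture matches the paper's: show that the right-hand side of \eqref{eq:zetahatU i} defines a holomorphic function on the enlarged domain $\frD_C=\{\Re(s_i)>-C,\ \Re(s_j)>C+2\}$, then invoke analytic continuation (since $\zeta^{}_\hatU$ is already known to be entire). You also anticipate the same case split according to whether $a_{i-1},a_{i+1}$ lie in $I_+$ or $I_-$. The execution differs somewhat: rather than seeking a uniform growth bound on $F_{a_{i-1}}^{a_{i+1}}(s_i)$, the paper \emph{resums} each case back into explicit combinations of the Hurwitz-type multiple series $\zeta(\,\cdot\,;-t_+)$ and $\zeta(\,\cdot\,;t_-)$, whose convergence on $\frD_C$ is immediate from the known convergence region of those series. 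In the mixed case $a_{i-1}\in I_+$, $a_{i+1}\in I_-$ the paper separates the $a$-independent part $\zeta(s_i;-t_+)+(-1)^{s_i}\zeta(s_i;t_-)$ of $F$ (which factors out of the sum) from the finite corrections, and rewrites the latter via a shuffle-type identity. This sidesteps the one delicate point in your plan: a direct bound on each Hurwitz piece of $F$ blows up near $s_i=1$, so your uniform estimate must treat the cancellation of poles carefully (e.g.\ via Euler--Maclaurin or by subtracting the principal part), whereas the paper's rewriting avoids this entirely. Your approach is still viable, just slightly more analytic where the paper is algebraic.

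One small slip: taking $C=k$ does not place $s_i=-k$ inside $\frD_C$, since the condition is the strict inequality $\Re(s_i)>-C$. The paper applies the first assertion with any $C>k$; for a given tuple with $\Re(s_j)>k+2$ one chooses $C$ with $k<C<\min_{j\neq i}\Re(s_j)-2$.
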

\begin{proof}
Let us write $\frD_C$ for the domain of $(s_1,\ldots,s_r)\in\C^r$ 
defined by $\Re(s_i)>-C$ and $\Re(s_j)>C+2$ $(\forall j \neq i)$. 
For the first asssertion, 
it suffices to show that the expression in \eqref{eq:zetahatU i} gives a holomorphic function on $\frD_C$ 
(recall that $\zeta^{}_\hatU$ is known to be holomorphic on $\C^r$). 

We divide the series in the right hand side of \eqref{eq:zetahatU i} as 
\[\sum_{(*)}=\sum_{(*),\,a_{i-1},a_{i+1}\in I_+}+\sum_{(*),\,a_{i-1},a_{i+1}\in I_-}
+\sum_{(*),\,a_{i-1}\in I_+, a_{i+1}\in I_-},\]
where $(*)$ stands for the condition 
$t_+\prec a_1\prec\cdots\prec a_{i-1}\prec a_{i+1}\prec\cdots\prec a_r\prec t_-$. 
Then the first sum is equal to 
\[\sum_{j=i+1}^r(-1)^{s_{j+1}+\cdots+s_r}\zeta(s_1,\dots,s_j;-t_+)\,\zeta(s_r,\dots,s_{j+1}; t_-),\]
and all series $\zeta(s_1,\dots,s_j;-t_+)$ and $\zeta(s_r,\dots,s_{j+1}; t_-)$ appearing here 
converge uniformly on compact sets of $\frD_C$. Hence the first sum gives a holomorphic function on $\frD_C$. 
The same argument works for the second sum as well. 
For the third sum, note that 
\[F_{a_{i-1}}^{a_{i+1}}(s_i)=\zeta(s_i;-t_+)-\sum_{n=1}^{n_{i-1}}\frac{1}{(n+t_+)^{s_i}}
+(-1)^{s_i}\zeta(s_i;t_-)-\sum_{n=1}^{n_{i+1}}\frac{1}{(-n+t_-)^{s_i}}\]
holds for $a_{i-1}=n_{i-1}+t_+\in I_+$ and $a_{i+1}=-n_{i+1}+t_-\in I_-$. 
Hence the third sum is equal to 
\begin{align*}
&\bigl\{\zeta(s_i; -t_+)+(-1)^{s_i}\zeta(s_i; t_-)\bigr\}\cdot
(-1)^{s_{i+1}+\cdots+s_r}\zeta(s_1,\dots,s_{i-1}; -t_+)\zeta(s_r,\dots,s_{i+1}; t_-)\\
&-\sum_{\substack{0<n_1<\cdots<n_{i-1}\\ 0<n_i\le n_{i-1}}}
\frac{1}{(n_1+t_+)^{s_1}\cdots (n_i+t_+)^{s_i}}
\cdot (-1)^{s_{i+1}+\cdots+s_r}\zeta(s_r,\dots,s_{i+1}; t_-)\\
&-\zeta(s_1,\dots,s_{i-1}; -t_+)\cdot (-1)^{s_i+\cdots+s_r}
\sum_{\substack{0<n_r<\cdots<n_{i+1}\\ 0<n_i\le n_{i+1}}}
\frac{1}{(n_r-t_-)^{s_r}\cdots (n_i-t_-)^{s_i}}.
\end{align*}
Two nested series converge absolutely and uniformly on compact sets in $\frD_C$; 
this can be seen, for example, by observing that 
\begin{multline*}
\sum_{\substack{0<n_1<\cdots<n_{i-1}\\ 0<n_i\le n_{i-1}}}\frac{1}{(n_1+t_+)^{s_1}\cdots (n_i+t_+)^{s_i}}\\
=\sum_{j=1}^{i-1}\bigl(\zeta(s_1,\ldots,s_{j-1},s_i,s_j,\ldots,s_{i-1};-t_+)
+\zeta(s_1,\ldots,s_j+s_i,\ldots,s_{i-1};-t_+)\bigr). 
\end{multline*}
Therefore, the third sum also gives a holomorphic function on $\frD_C$, 
and the proof of our first assertion is complete. 
The second assertion is deduced from the first, applied to all $C>k$.
\end{proof}

\begin{lem} \label{lem:FaulhaberU}
For a non-negative integer $k$ and $a, b \in I$ with $a\prec b$, we have
\[F_a^b(-k)=\frac{1}{k+1}\sum_{j=0}^{k+1}\binom{k+1}{j}B_j\bigl((-1)^j b^{k+1-j}-a^{k+1-j}\bigr).\]
\end{lem}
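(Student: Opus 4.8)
The plan is to treat the three cases of the Kontsevich order separately, according to whether $a$ and $b$ both lie in $I_+$, both lie in $I_-$, or $a\in I_+$ and $b\in I_-$, and to show in each case that $F_a^b(-k)$ equals the single expression $\bigl(B_{k+1}(b)-B_{k+1}(a+1)\bigr)/(k+1)$, where $B_{k+1}(x)$ denotes the Bernoulli polynomial. First I would record that the claimed right-hand side is exactly this expression: expanding $B_{k+1}(x+1)=\sum_{j}\binom{k+1}{j}B_j x^{k+1-j}$, with the paper's convention $B_1=1/2$ (so that the paper's $B_j$ is the value $B_j(1)$), identifies the $-a^{k+1-j}$ part with $-B_{k+1}(a+1)/(k+1)$, while the reflection identity $(-1)^jB_j(1)=B_j(0)$ turns the $(-1)^jb^{k+1-j}$ part into $B_{k+1}(b)/(k+1)$. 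Thus it suffices to prove $F_a^b(-k)=\bigl(B_{k+1}(b)-B_{k+1}(a+1)\bigr)/(k+1)$, and the tools I will use are the telescoping Faulhaber identity $\sum_{m=0}^{N-1}(m+x)^k=\bigl(B_{k+1}(N+x)-B_{k+1}(x)\bigr)/(k+1)$, the classical Hurwitz value $\zeta_H(-k,x)=-B_{k+1}(x)/(k+1)$, and the reflection $B_{k+1}(1-x)=(-1)^{k+1}B_{k+1}(x)$.

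For the same-side cases, $F_a^b(-k)$ is a finite power sum. If $a=n_a+t_+$ and $b=n_b+t_+$ lie in $I_+$, then $F_a^b(-k)=\sum_{n_a<n<n_b}(n+t_+)^k$, and the shifted Faulhaber identity evaluates this directly to $\bigl(B_{k+1}(n_b+t_+)-B_{k+1}(n_a+1+t_+)\bigr)/(k+1)=\bigl(B_{k+1}(b)-B_{k+1}(a+1)\bigr)/(k+1)$, the degenerate case $b=a+1$ giving the empty sum consistently. If $a=-n_a+t_-$ and $b=-n_b+t_-$ lie in $I_-$ (so $n_a>n_b$ since $a\prec b$), then by the prescribed reading of powers of elements of $I_-$ we have $F_a^b(-k)=(-1)^k\sum_{n_b<n<n_a}(n-t_-)^k$; evaluating this power sum and then applying the reflection formula to both $B_{k+1}(n_a-t_-)$ and $B_{k+1}(n_b+1-t_-)$ again reproduces the target.

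The split case $a=n_a+t_+\in I_+$, $b=-n_b+t_-\in I_-$ is the one that genuinely requires analytic continuation, since the elements strictly between $a$ and $b$ form two infinite families with no finite cut separating them. Here I would invoke the continuation $F_a^b(s)=\zeta_H(s,n_a+1+t_+)+(-1)^s\zeta_H(s,n_b+1-t_-)$ obtained in the preceding lemma, specialize to $s=-k$, substitute $\zeta_H(-k,x)=-B_{k+1}(x)/(k+1)$, and use $(-1)^{-k}=(-1)^k$ together with $B_{k+1}(-n_b+t_-)=(-1)^{k+1}B_{k+1}(n_b+1-t_-)$ to collapse the two Hurwitz terms into $\bigl(B_{k+1}(b)-B_{k+1}(a+1)\bigr)/(k+1)$. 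I expect the main obstacle to be precisely the sign bookkeeping in the $I_-$ and split cases: the factor $(-1)^j$ attached only to the $b$-term in the statement is invisible in the finite power sums themselves and emerges only after combining the $(-1)^s$ weighting on $I_-$ with the Bernoulli reflection formula, so ensuring these signs land on the stated convention $B_1=1/2$ rather than the standard $B_1=-1/2$ is where care is needed.
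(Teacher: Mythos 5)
Your proof is correct and follows essentially the same route as the paper's: the same-side cases are finite power sums handled by Faulhaber's formula, and the split case uses the Hurwitz zeta continuation of $F_a^b(s)$ together with $\zeta_H(-k,x)=-B_{k+1}(x)/(k+1)$ and the Bernoulli reflection formula. Your repackaging of the right-hand side as $\bigl(B_{k+1}(b)-B_{k+1}(a+1)\bigr)/(k+1)$ is a pleasant unification but not a different argument.
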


\begin{proof}
For $a,b \in I_+$ or $a,b \in I_-$, the statement follows immediately from Faulhaber's formula. 
If $a\in I_+$ and $b\in I_-$, we have
\[F_a^b(s)=\zeta(s;-a)+(-1)^s\zeta(s;b).\]
Let $B_n(x)\in\Q[x]$ be the $n$-th Bernoulli polynomial defined by 
$ze^{xz}/(e^z-1)=\sum_{n=0}B_n(x)z^n/n!$ 
(for the properties of Bernoulli polynomials, we refer to \cite[\S 4.3]{AIK}). 
Since $\zeta(s;a)=\zeta_H(s,1-a)$ and
\[\zeta_H(-k,a)=-\frac{B_{k+1}(a)}{k+1},\]
we have
\[\zeta(-k;a)=-\frac{B_{k+1}(1-a)}{k+1}=(-1)^k\frac{B_{k+1}(a)}{k+1}
=\frac{(-1)^k}{k+1}\sum_{j=0}^{k+1}(-1)^j\binom{k+1}{j}B_j\,a^{k+1-j}.\]
Thus we obtain
\begin{align*}
F_a^b(-k)&=\zeta(-k;-a)+(-1)^k\zeta(-k;b)\\
&=\frac{1}{k+1}\sum_{j=0}^{k+1}\binom{k+1}{j}B_j\bigl((-1)^jb^{k+1-j}-a^{k+1-j}\bigr). \qedhere
\end{align*}
\end{proof}

\begin{proof}[Proof of Theorem \ref{thm:mainU}]
By using Lemma \ref{lem:zetahatU i k} and Lemma \ref{lem:FaulhaberU}, 
this is proved in the same way as Theorem \ref{thm:mainA}. 
\end{proof}

\begin{proof}[Proof of Theorem \ref{thm:Intro2}]
We proceed by induction on the depth $r$ of $\bk=(k_1,\ldots,k_r)$. 
The statement is trivial for $r=0$, since we have 
$\zeta^{}_\hatA(\varnothing)=1$ and $\zeta^{}_\hatS(\varnothing)=1$ by convention. 
Let us consider the case $r=1$. If $\bk=(k)$ with $k>0$, there is nothing to prove.  
On the other hand, if $\bk=(-k)$ with $k\ge 0$, we have 
\[\zeta^{}_\hatA(-k)
=\frac{1}{k+1}\sum_{j=0}^{k+1}\binom{k+1}{j}B_j\bigl((-1)^j\bp^{k+1-j}-\delta_{j,k+1}\bigr)\]
and 
\[\zeta^{}_\hatS(-k)
=\frac{1}{k+1}\sum_{j=0}^{k+1}\binom{k+1}{j}B_j\bigl((-1)^j t^{k+1-j}-\delta_{j,k+1}\bigr), \]
hence the statement is true. 

Let $\bk=(k_1,\ldots,k_r)\in\Z^r$ with $r\ge 2$. 
Again, there is nothing to do if $\bk\in\Z_{>0}^r$. 
Otherwise, we can apply Theorem \ref{thm:mainA} and Theorem \ref{thm:mainU} 
to see that $\zeta^{}_\hatA(\bk)$ and $\zeta^{}_\hatS(\bk)$ admit a common expression 
as a $\Q$-linear combination of the values of depth $r-1$. 
Thus, by the induction hypothesis, the statement is also true for $\bk$. 
This completes the proof of Theorem \ref{thm:Intro2}. 
\end{proof}

\section{Kaneko--Zagier conjecture}\label{sec:KZconj}
In this section, we present the precise statement of Theorem \ref{thm:Intro1} related with the Kaneko--Zagier conjecture 
and its refinement. 

First let us recall the original conjecture on $\cA$-MZVs and $\cS$-MZVs proposed by Kaneko and Zagier \cite{KZ}. 
For a general index $\bk \in \Z^{r}$ ($r\ge0$), let 
\[\zeta^{}_{\cA}(\bk)\coloneqq\bigl(\zeta_p(\bk)\bmod p\bigr)_p
\in\cA\coloneqq\Biggl(\prod_p\Z/p\Z\Biggr)\Biggm/\Biggl(\bigoplus_p\Z/p\Z\Biggr)\]
be the $\cA$-MZV and let $\cZ_{\cA}$ denote the $\Q$-subalgebra of $\cA$ generated by $\cA$-MZVs $\zeta^{}_{\cA}(\bk)$ 
for all \emph{positive} indices $\bk \in \Z^{r}_{>0}$. 
By construction, we have the identity $\zeta^{}_\cA(\bk)=\zeta^{}_\hatA(\bk)\bmod\bp$ 
under the natural isomorphism $\cA\cong\hatA/\bp\hatA$. 

Following Komori \cite{Ko}, we define the $\cS$-MZV (resp.~$\hatS$-MZV) for a general index $\bk\in\Z^r$ by 
\[\zeta^{}_{\cS}(\bk)\coloneqq \zeta^{}_{\hatU}(\bk; 0, 0) \bmod \pi i\cZ[\pi i] \quad 
\bigl(\text{resp.}\ \zeta^{}_{\hatS}(\bk)\coloneqq \zeta^{}_{\hatU}(\bk; 0, t) \bmod \pi i\cZ[\pi i]\bbra{t}\bigr). \]
By Corollary \ref{cor:Z[pi i]}, they belong to the rings 
$\cZ[\pi i]/\pi i\cZ[\pi i]\cong\overline{\cZ}$ and $\overline{\cZ}\bbra{t}$, respectively. 

\begin{conj}[Kaneko--Zagier conjecture] \label{conj:KZ}
There exists a (necessarily unique) $\Q$-algebra isomorphism $\cZ_{\cA} \to \overline{\cZ}$ 
which sends $\zeta^{}_{\cA}(\bk)$ to $\zeta^{}_{\cS}(\bk)$ for any positive index $\bk$. 
In particular, $\cA$-MZVs and $\cS$-MZVs of positive indices satisfy the same $\Q$-linear relations, i.e., 
a $\Q$-linear relation $\sum_\bk a_\bk\zeta^{}_\cA(\bk)=0$ among $\cA$-MZVs of positive indices holds 
if and only if the corresponding relation $\sum_\bk a_\bk\zeta^{}_\cS(\bk)=0$ among $\cS$-MZVs holds.  
\end{conj}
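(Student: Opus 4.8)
The plan is to prove the isomorphism by the standard two-step scheme for a map prescribed on generators: first show that the assignment $\zeta^{}_\cA(\bk)\mapsto\zeta^{}_\cS(\bk)$ extends to a well-defined $\Q$-algebra homomorphism $\phi\colon\cZ_\cA\to\overline{\cZ}$, and then show that $\phi$ is bijective. Surjectivity is immediate, since $\overline{\cZ}$ is generated over $\Q$ by the $\zeta^{}_\cS(\bk)$ with $\bk$ positive. Hence the entire content is the pair of linear statements in the second half of the conjecture: well-definedness of $\phi$ is precisely the assertion that every $\Q$-linear relation among the $\zeta^{}_\cA(\bk)$ holds for the $\zeta^{}_\cS(\bk)$, and injectivity is the converse.

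For the forward (well-definedness) direction I would factor both period maps $\bk\mapsto\zeta^{}_\cA(\bk)$ and $\bk\mapsto\zeta^{}_\cS(\bk)$ through one combinatorial algebra of indices carrying both a harmonic (stuffle) and a shuffle product together with their regularizations. Both $\cA$- and $\cS$-MZVs are known to satisfy the regularized double shuffle relations, so any relation produced inside this common algebra descends to both families simultaneously, and $\phi$ respects such relations by construction. The remaining task is to argue that the double shuffle relations, possibly augmented by duality and finitely many further families, already generate \emph{all} relations on each side.

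For bijectivity the decisive input is a dimension comparison: one must show that in each weight $w$ the graded pieces $(\cZ_\cA)_w$ and $\overline{\cZ}_w$ have equal $\Q$-dimension, and that the common relations above cut both spaces down to exactly this dimension. The natural framework is motivic: $\overline{\cZ}$ is conjecturally the graded algebra governed by the motivic Galois group of mixed Tate motives over $\Z$, whose associated graded Lie algebra is free on one generator in each odd degree $\ge 3$. One would then try to exhibit the same free structure on the $\cA$ side uniformly in $p$, so that the two associated graded objects become abstractly isomorphic and $\phi$ is forced to be an isomorphism.

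The hard part will be exactly this uniform control of the $\cA$ side. Ruling out extra relations among the $\zeta^{}_\cA(\bk)$ modulo almost all $p$, beyond those forced by double shuffle, is equivalent to the (still open) dimension conjecture for the MZV algebra together with a transcendence-type independence statement, and no purely finite or $p$-adic method currently reaches the free Lie structure underlying $\overline{\cZ}$. I therefore expect the reduction to hinge on a motivic or Galois-theoretic comparison that is itself open, so that a complete proof lies beyond present techniques; the realistic near-term goal is to verify the two implications weight by weight and to recast the conjecture as an equality of explicitly described relation ideals on the two sides.
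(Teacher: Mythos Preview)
The statement you are attempting to prove is Conjecture~\ref{conj:KZ}, the Kaneko--Zagier conjecture. The paper does \emph{not} prove this statement; it is recorded precisely as an open conjecture, and the paper's contribution (Theorem~\ref{thm:general KZ}) is only the conditional result that \emph{if} Conjecture~\ref{conj:KZ} (resp.~Conjecture~\ref{conj:RKZ}) holds for positive indices, then the corresponding isomorphism automatically sends $\zeta^{}_\cA(\bk)$ to $\zeta^{}_\cS(\bk)$ (resp.~$\zeta^{}_\hatA(\bk)$ to $\zeta^{}_\hatS(\bk)$) for general indices as well. There is therefore no proof in the paper to compare your proposal against.

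Your proposal is not a proof but a strategic outline, and you yourself acknowledge this in the final paragraph. The reductions you describe are accurate as far as they go: well-definedness of $\phi$ is exactly the forward implication on relations, injectivity is the converse, and surjectivity is trivial. However, the step where you write ``the remaining task is to argue that the double shuffle relations \ldots\ already generate \emph{all} relations on each side'' is itself a famous open problem (even on the $\cS$ side this is the double shuffle conjecture for MZVs modulo $\pi^2$), and the dimension/motivic comparison you invoke for bijectivity is likewise open. So the honest summary is that your proposal correctly identifies the shape of what would need to be shown and correctly concludes that a complete proof is out of reach; it does not, and at present cannot, fill the gap.
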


To state the corresponding conjecture for $\hatA$- and $\hatS$-MZVs, 
we equip the $\Q$-algebras $\hatA$ and $\overline{\cZ}\bbra{t}$ with the $\bp$-adic and $t$-adic topology, 
respectively. Then let $\cZ_\hatA$ denote the closed $\Q$-subalgebra of $\hatA$ 
generated by $\bp$ and $\hatA$-MZVs $\zeta^{}_\hatA(\bk)$ of positive indices $\bk\in\Z^r$. 
Explicitly, $\cZ_\hatA$ consists of elements of the form 
$\sum_{i=1}^\infty a_i\zeta^{}_\hatA(\bk_i)\bp^{n_i}\in\hatA$ 
with $a_i\in\Q$, positive indices $\bk_i$, 
and integers $n_i\ge 0$ satisfying $n_i\to\infty$ ($i\to\infty$). 

\begin{conj}[{Refined Kaneko--Zagier conjecture, \cite{OSY}}] \label{conj:RKZ}
There exists a (necessarily unique) topological $\Q$-algebra isomorphism $\cZ_{\hatA} \to \overline{\cZ}\bbra{t}$ 
which sends $\bp$ to $t$, and $\zeta^{}_{\hatA}(\bk)$ to $\zeta^{}_{\hatS}(\bk)$ for any positive index $\bk$. 
In particular, $\hatA$-MZVs and $\hatS$-MZVs satisfy the same $\Q$-linear relations.
\end{conj}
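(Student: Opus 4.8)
The statement packages several assertions about a single map $\Phi\colon \cZ_{\hatA}\to\overline{\cZ}\bbra{t}$: that it is (i) a well-defined $\Q$-algebra homomorphism, (ii) injective, (iii) surjective, and (iv) continuous for the $\bp$-adic and $t$-adic topologies; uniqueness is then automatic, since by construction $\bp$ together with the $\zeta_{\hatA}(\bk)$ for positive $\bk$ generate $\cZ_{\hatA}$ as a closed subalgebra, so any two homomorphisms agreeing on these generators agree everywhere. Continuity in (iv) is forced as well, because both algebras are complete for weight-type filtrations in which $\bp$ and $t$ have weight $1$ and $\Phi$ respects the filtration degree. The overall plan is therefore to realize both $\cZ_{\hatA}$ and $\overline{\cZ}\bbra{t}$ as quotients of one common presenting algebra $\cU$ by one common ideal of relations $\mathfrak{R}$, and then to show that the resulting comparison map is bijective by a weight-by-weight dimension count. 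Concretely, I would let $\cU$ be the completed, weight-graded free $\Q$-algebra on a symbol $P$ (of weight $1$) together with symbols $Z(\bk)$ indexed by positive indices, modulo the ideal $\mathfrak{R}$ generated by all relations valid simultaneously on both sides.

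The first genuine step is the \emph{existence} of the forward maps $\cU\to\cZ_{\hatA}$ and $\cU\to\overline{\cZ}\bbra{t}$ sending $P\mapsto\bp,\ Z(\bk)\mapsto\zeta_{\hatA}(\bk)$ and $P\mapsto t,\ Z(\bk)\mapsto\zeta_{\hatS}(\bk)$ respectively; this amounts to checking that every generator of $\mathfrak{R}$ dies on each side. The natural candidate generators are the finite/symmetric double-shuffle relations together with the regularization and duality relations: their validity for $\hatA$-MZVs is classical, and the corresponding statements for $\hatS$-MZVs follow from the series expression \eqref{eq:zeta_hatS} and from Corollary \ref{cor:Z[pi i]}, which guarantees that all the $\hatS$-values live in $\overline{\cZ}\bbra{t}$ where these identities can be compared. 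The delicate feature already at this stage is the $\bp\leftrightarrow t$ dictionary: relations in $\cZ_{\hatA}$ that carry positive powers of $\bp$ must match relations in $\overline{\cZ}\bbra{t}$ carrying the same powers of $t$. Here the explicit Bernoulli formulas of Theorem \ref{thm:mainA} and Theorem \ref{thm:mainU} are exactly the model for why the matching is automatic, since they produce identical $\bp$- and $t$-coefficients in lockstep; this is the technical heart of why $\Phi$ can be expected to intertwine the two structures.

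The heart of the conjecture is to upgrade these surjections to isomorphisms, i.e.\ to show that $\mathfrak{R}$ is the \emph{full} relation ideal on each side. The natural route is a dimension count: if the weight-$w$ components of $\cU/\mathfrak{R}$, of $\cZ_{\hatA}$, and of $\overline{\cZ}\bbra{t}$ all have the same finite $\Q$-dimension, then the two sandwiched surjections $\cU/\mathfrak{R}\twoheadrightarrow\cZ_{\hatA}$ and $\cU/\mathfrak{R}\twoheadrightarrow\overline{\cZ}\bbra{t}$ are forced to be bijective, and $\Phi$ is their composite. To control these dimensions I would invoke the conjectural motivic/Galois structure common to both theories: both the symmetric and the finite MZVs are expected to be governed by (a quotient of) the motivic Galois group of mixed Tate motives over $\Z$, whose Lie algebra is free on one generator in each odd weight $\ge 3$. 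Brown's theorem on motivic MZVs, refined by the Deligne--Goncharov framework, supplies the upper bound on $\dim\overline{\cZ}_w$, and an analogous motivic or Galois-theoretic lift of the $\cA$-MZVs would supply the matching bound on $\dim(\cZ_{\hatA})_w$.

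The main obstacle is precisely this last step, and it is where the plan meets genuinely open ground. The dimension equalities invoked above are not theorems: the lower bound $\dim\overline{\cZ}_w\ge d_w$ is Zagier's dimension conjecture, presently available only as an upper bound, and the corresponding statement for $\cZ_{\hatA}$ is the finite analogue of the same conjecture, equally unresolved. Thus the honest outcome of the strategy is a \emph{reduction}: Conjecture \ref{conj:RKZ} follows once one knows that the explicit relations exhaust all relations on both sides and that the two weight-dimension sequences coincide---assertions of the same depth as the transcendence and dimension conjectures for ordinary MZVs. I therefore do not expect to settle the conjecture unconditionally; what the plan can realistically deliver is the unconditional construction of the forward homomorphism $\Phi$, together with the clean reduction of bijectivity to these dimension conjectures, the Bernoulli-type formulas of \S\ref{sec:hatA} and \S\ref{sec:hatU} ensuring throughout that the $\bp\leftrightarrow t$ correspondence is compatible with every relation in sight.
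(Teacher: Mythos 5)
You are attempting to prove a statement that is not a theorem of this paper: it is Conjecture \ref{conj:RKZ}, quoted from \cite{OSY}, and the authors say explicitly in \S\ref{sec:Intro} that it ``seems still far from resolution.'' The paper contains no proof of it; its actual contributions (Theorems \ref{thm:Intro2}, \ref{thm:mainA}, \ref{thm:mainU} and \ref{thm:general KZ}) show only that \emph{if} Conjecture \ref{conj:RKZ} holds for positive indices, then the same correspondence extends to general integer indices. So there is no proof in the paper to match yours against, and your own proposal—to its credit—concedes in its final paragraph that it delivers only a conditional reduction, not a proof. That concession is the correct assessment: the statement is open.

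Still, several of your intermediate steps are themselves open problems, not just the final dimension count, and it is worth naming them. (1) Well-definedness of $\Phi$ already encodes the entire conjecture: sending $\bp\mapsto t$ and $\zeta^{}_{\hatA}(\bk)\mapsto\zeta^{}_{\hatS}(\bk)$ requires \emph{every} $\Q$-linear relation in $\cZ_{\hatA}$—not merely the double-shuffle and duality families—to have a counterpart in $\overline{\cZ}\bbra{t}$; since it is unknown whether those families exhaust all relations, checking generators of $\mathfrak{R}$ produces a map out of your presented algebra $\cU/\mathfrak{R}$, but not out of $\cZ_{\hatA}$ itself. (2) Your continuity and weight-by-weight dimension arguments presuppose that $\cZ_{\hatA}$ carries a weight grading with finite-dimensional graded pieces; no such grading is known to exist—indeed, the well-definedness of ``weight'' for $\hatA$-MZVs is essentially equivalent to the conjectural isomorphism you are trying to construct. (3) Surjectivity onto $\overline{\cZ}\bbra{t}$ requires that $t$ and the $\zeta^{}_{\hatS}(\bk)$ topologically generate $\overline{\cZ}\bbra{t}$, i.e.\ that symmetric MZVs generate $\overline{\cZ}$, which is also open. (4) Finally, a misreading of the paper's tools: Theorems \ref{thm:mainA} and \ref{thm:mainU} do not certify any $\bp\leftrightarrow t$ compatibility of relations among positive-index values; they reduce values of indices containing a non-positive entry to positive-index values with matched polynomial coefficients $c_{\bk,\bl}(\bp)$ and $c_{\bk,\bl}(t)$, which is exactly what powers the paper's conditional Theorem \ref{thm:general KZ}. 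If you want a provable statement in this circle, that reduction (Theorem \ref{thm:Intro2}, via the Faulhaber/Bernoulli formulas of \S\ref{sec:hatA} and \S\ref{sec:hatU}) is the right target, not Conjecture \ref{conj:RKZ} itself.
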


\begin{rem} \label{rem:hatKZ to KZ}
At present, it has not been proven that the refined Kaneko--Zagier conjecture (Conjecture \ref{conj:RKZ}) 
implies the original Kaneko--Zagier conjecture (Conjecture \ref{conj:KZ}). 
In fact, assuming the refined conjecture, the original one is equivalent to the equality 
\begin{equation}\label{eq:ideal}
    \bp\cZ_\hatA=\Ker(\cZ_\hatA\to\cZ_\cA)
\end{equation}
of ideals in $\cZ_\hatA$. This equality is, however, 
a spacial case of Rosen's ``asymptotic extension conjecture'' 
\cite[Conjecture A]{R1}, which is unsolved yet. 
\end{rem}

As an obvious consequence of Theorem \ref{thm:Intro2}, we obtain the following result. 
The part (ii) is the precise version of Theorem \ref{thm:Intro1}. 

\begin{thm}\label{thm:general KZ}
\begin{enumerate}
\item 
Suppose that Conjecture \ref{conj:KZ} is true. 
Then the isomorphism $\cZ_{\cA} \to \overline{\cZ}$ sends $\zeta^{}_{\cA}(\bk)$ to $\zeta^{}_{\cS}(\bk)$ 
for any general index $\bk$. 
In particular, $\cA$-MZVs and $\cS$-MZVs of general indices satisfy the same $\Q$-linear relations.
\item 
Suppose that Conjecture \ref{conj:RKZ} is true. 
Then the isomorphism $\cZ_{\hatA} \to \overline{\cZ}\bbra{t}$ sends 
$\zeta^{}_{\hatA}(\bk)$ to $\zeta^{}_{\hatS}(\bk)$ for any general index $\bk\in\Z^r$. 
In particular, $\hatA$-MZVs and $\hatS$-MZVs of general indices satisfy the same $\Q$-linear relations. 
\end{enumerate}
\end{thm}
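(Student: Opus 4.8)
The plan is to deduce both statements directly from Theorem \ref{thm:Intro2}, which supplies a single family of polynomials $c_{\bk,\bl}(x)\in\Q[x]$, indexed by finitely many positive indices $\bl$, that simultaneously expresses $\zeta^{}_\hatA(\bk)$ and $\zeta^{}_\hatS(\bk)$ in terms of positive-index values. The crucial point is that the maps furnished by the two conjectures are $\Q$-algebra homomorphisms, and hence commute with the polynomial expressions in $\bp$ (resp.\ $t$) occurring in these formulas. Once this is observed, each assertion follows by transporting the relevant identity of Theorem \ref{thm:Intro2} through the isomorphism.

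I would treat part (ii) first. Assume Conjecture \ref{conj:RKZ}, and let $\Phi\colon\cZ_\hatA\to\overline{\cZ}\bbra{t}$ denote the resulting isomorphism, so that $\Phi(\bp)=t$ and $\Phi(\zeta^{}_\hatA(\bl))=\zeta^{}_\hatS(\bl)$ for every positive index $\bl$. For an arbitrary general index $\bk$, Theorem \ref{thm:Intro2} gives the finite expression $\zeta^{}_\hatA(\bk)=\sum_\bl c_{\bk,\bl}(\bp)\zeta^{}_\hatA(\bl)$; since $\bp\in\cZ_\hatA$ and $\zeta^{}_\hatA(\bl)\in\cZ_\hatA$ for positive $\bl$, this already shows $\zeta^{}_\hatA(\bk)\in\cZ_\hatA$, so that $\Phi$ may be applied. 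Using that $\Phi$ is a $\Q$-algebra map and $\Phi(\bp)=t$, I obtain
\[
\Phi\bigl(\zeta^{}_\hatA(\bk)\bigr)
=\sum_\bl c_{\bk,\bl}(t)\,\zeta^{}_\hatS(\bl)
=\zeta^{}_\hatS(\bk),
\]
where the final equality is precisely the second identity of Theorem \ref{thm:Intro2}. Since $\Phi$ is injective, a relation $\sum_\bk a_\bk\zeta^{}_\hatA(\bk)=0$ holds if and only if its image $\sum_\bk a_\bk\zeta^{}_\hatS(\bk)=0$ does, giving the statement about $\Q$-linear relations.

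For part (i) I would run the same argument after passing to the level of $\cA$ and $\overline{\cZ}$. Under $\cA\cong\hatA/\bp\hatA$ we have $\zeta^{}_\cA(\bk)=\zeta^{}_\hatA(\bk)\bmod\bp$, so reducing the first identity of Theorem \ref{thm:Intro2} modulo $\bp$ (which sends $\bp\mapsto0$ and hence each $c_{\bk,\bl}(\bp)$ to its constant term) leaves $\zeta^{}_\cA(\bk)=\sum_\bl c_{\bk,\bl}(0)\,\zeta^{}_\cA(\bl)$; in particular $\zeta^{}_\cA(\bk)\in\cZ_\cA$. On the other side, setting $t=0$ turns $\zeta^{}_\hatS(\bk)$ into $\zeta^{}_\cS(\bk)$, so the second identity specializes to $\zeta^{}_\cS(\bk)=\sum_\bl c_{\bk,\bl}(0)\,\zeta^{}_\cS(\bl)$ with the \emph{same} rational constants $c_{\bk,\bl}(0)$. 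Applying the isomorphism $\cZ_\cA\to\overline{\cZ}$ of Conjecture \ref{conj:KZ}, which carries $\zeta^{}_\cA(\bl)$ to $\zeta^{}_\cS(\bl)$ for positive $\bl$, then carries $\zeta^{}_\cA(\bk)$ to $\zeta^{}_\cS(\bk)$, and injectivity again yields the claim on linear relations.

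The computational substance is entirely absorbed into Theorem \ref{thm:Intro2}, so I do not anticipate any serious obstacle; the only points needing care are bookkeeping ones. Specifically, one must confirm that $\zeta^{}_\hatA(\bk)$ and $\zeta^{}_\cA(\bk)$ genuinely lie in the subalgebras $\cZ_\hatA$ and $\cZ_\cA$ before invoking the isomorphisms, and one must check that the two specializations—$\bp\mapsto0$ on the $\hatA$/$\cA$-side and $t\mapsto0$ on the $\hatS$/$\cS$-side—reduce the very same family of polynomials to their constant terms $c_{\bk,\bl}(0)$. The compatibility of these two reductions is exactly what allows the single family $(c_{\bk,\bl})_\bl$ of Theorem \ref{thm:Intro2} to serve both halves of the statement at once.
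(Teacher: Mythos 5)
Your proposal is correct and fills in exactly the argument the paper intends: the paper states Theorem \ref{thm:general KZ} as an ``obvious consequence'' of Theorem \ref{thm:Intro2}, and your route---applying the conjectural $\Q$-algebra isomorphism to the simultaneous polynomial expressions of Theorem \ref{thm:Intro2}, and for part (i) specializing $\bp\mapsto 0$ and $t\mapsto 0$ to descend to the $\cA$/$\cS$-level---is the intended one. The bookkeeping points you flag (membership in $\cZ_\hatA$ and $\cZ_\cA$, and the compatibility of the two specializations) are handled correctly.
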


\section{Values at non-positive integer points}\label{sec:values}
As another application of Theorem \ref{thm:mainU}, 
we show a formula for the values of $\zeta^{}_{\hatU}(\bs; t_\pm)$ at non-positive integer points.
This is a generalization of Komori's formula \cite[Theorem 1.17]{Ko} on 
$\zeta^{}_\hatU(-k_1,\ldots,-k_r;t)=\zeta^{}_\hatU(-k_1,\ldots,-k_r;0,t)$. 
Our proof looks quite different from Komori's argument. 

\begin{thm}\label{thm:non-positive}
For any non-negative integers $k_1,\ldots,k_r$, the value $\zeta^{}_\hatU(-k_1,\ldots,-k_r;t_\pm)$ is 
a polynomial function of rational coefficients in $t_+$ and $t_-$. 
Moreover, the generating function 
\[G_r(z_1,\ldots,z_r;t_\pm)
\coloneqq \sum_{k_1,\ldots,k_r\ge 0}\zeta^{}_\hatU(-k_1,\ldots,-k_r;t_\pm)
\frac{z_1^{k_1}}{k_1!}\cdots\frac{z_r^{k_r}}{k_r!} \]
of these values can be expressed as 
\begin{equation}\label{eq:G_r}
\begin{split}
G_r&(z_1,\ldots,z_r;t_\pm)\\
&=\sum_{i=0}^r(-1)^{r-i}e^{t_+(z_1+\cdots+z_i)+t_-(z_{i+1}+\cdots+z_r)}
\prod_{l=1}^i\frac{e^{z_l+\cdots+z_i}}{1-e^{z_l+\cdots+z_i}}
\prod_{l=i+1}^r\frac{1}{1-e^{z_{i+1}+\cdots+z_l}}. 
\end{split}
\end{equation}
\end{thm}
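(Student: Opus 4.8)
The plan is to prove the two assertions in turn, establishing the polynomiality first (which is elementary) and then the explicit generating function.

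First I would settle polynomiality by induction on $r$. For $r=1$, I combine the decomposition \eqref{eq:zeta_hatU} with the identity $\zeta(-k;a)=(-1)^kB_{k+1}(a)/(k+1)$ derived inside the proof of Lemma \ref{lem:FaulhaberU}, which gives $\zeta^{}_\hatU(-k;t_\pm)=\zeta(-k;-t_+)+(-1)^k\zeta(-k;t_-)\in\Q[t_+,t_-]$. For the inductive step I apply Theorem \ref{thm:mainU}(iii) to the last entry $s_r=-k_r$: every term on the right is a rational multiple of $t_-^{\,k_r+1-j}$ times a value $\zeta^{}_\hatU$ of depth $r-1$ whose entries are again non-positive integers, since the shifted entry $s_{r-1}-k_r-1+j$ stays $\le 0$ for $0\le j\le k_r+1$. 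The induction hypothesis then yields polynomiality with rational coefficients, which is the first assertion.

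For the generating function \eqref{eq:G_r} I would use the Kontsevich-order presentation together with the factorization \eqref{eq:zeta_hatU}. At $s_j=-k_j$ the summand indexed by $i$ is a product of a power sum over an increasing chain $a_1\prec\cdots\prec a_i$ in $I_+$ and one over a decreasing chain $a_{i+1}\prec\cdots\prec a_r$ in $I_-$. Passing to the exponential generating function turns each chain into an iterated geometric series: writing $a_l=m_l+t_+$ with $1\le m_1<\cdots<m_i$ and setting $m_l=d_1+\cdots+d_l$ with $d_p\ge1$, the positive chain contributes $e^{t_+(z_1+\cdots+z_i)}\prod_{p=1}^i\frac{e^{z_p+\cdots+z_i}}{1-e^{z_p+\cdots+z_i}}$; writing $a_l=-m_l'+t_-$ with $m_{i+1}'>\cdots>m_r'\ge1$, the negative chain contributes $e^{t_-(z_{i+1}+\cdots+z_r)}\prod_{p=i+1}^r\frac{1}{e^{z_{i+1}+\cdots+z_p}-1}$. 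Rewriting $\frac{1}{e^W-1}=-\frac{1}{1-e^W}$ produces exactly the sign $(-1)^{r-i}$, and summing over $i$ reproduces \eqref{eq:G_r}. This computation is where the shape of every factor, and the sign $(-1)^{r-i}$, comes from.

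The hard part is that these geometric series are the zeta-regularizations of divergent power sums, whereas $\zeta^{}_\hatU(-\bk;t_\pm)$ is defined by analytic continuation, so I must show the two agree. I would secure this rigorously by induction on $r$, feeding Theorem \ref{thm:mainU}(iii) into the generating function and verifying that the right-hand side of \eqref{eq:G_r} satisfies the resulting recursion (the base case $r=1$ being the computation above). The identity $\frac{1}{k+1}\sum_{j}\binom{k+1}{j}B_j(-1)^j a^{k+1-j}=(-1)^k\zeta(-k;a)$ together with the Bernoulli generating function $\sum_n B_n(x)z^n/n!=ze^{xz}/(e^z-1)$ converts the depth-preserving term of Theorem \ref{thm:mainU}(iii) into multiplication by the single-variable kernel $\frac{e^{t_-z_r}}{e^{z_r}-1}-\frac{1}{z_r}$. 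The genuine labor, and the main obstacle, is the remaining term, which merges $k_{r-1}$ and $k_r$ into a single argument and hence becomes a Bernoulli convolution in $z_{r-1},z_r$; the conceptual checkpoint is that the $1/z_l$-type singular parts carried by the single-variable kernels (already visible in $\sum_k\zeta(-k;a)z^k/k!=-\frac{e^{(1-a)z}}{e^z-1}+\frac{1}{z}$) must cancel across the sum over $i$, so that $G_r$ is holomorphic at the origin. This cancellation is consistent with, and is guaranteed by, the polynomiality proved in the first step.
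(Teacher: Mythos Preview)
Your proposal is correct and follows essentially the same strategy as the paper: polynomiality by induction on $r$ via Theorem~\ref{thm:mainU}(iii), then the generating-function identity by deriving a recursion for $G_r$ from the same theorem and checking that the right-hand side of \eqref{eq:G_r} obeys it.

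One point worth sharpening: the ``merged'' term you describe as a Bernoulli convolution in $z_{r-1},z_r$ comes out much more cleanly than your phrasing suggests. If in Theorem~\ref{thm:mainU}(iii) you reindex by $l=k_r+1-j$ and allow the harmless extra pair $(j,l)=(0,0)$ (whose contributions to the two bracketed terms cancel), the factor $\sum_{j}B_j z_r^{j}/j!=z_re^{z_r}/(e^{z_r}-1)$ separates from the $l$-sum, and the binomial identity $\sum_{k_{r-1}+l=m}\frac{z_{r-1}^{k_{r-1}}}{k_{r-1}!}\frac{z_r^{l}}{l!}=\frac{(z_{r-1}+z_r)^m}{m!}$ collapses the merged argument to a mere shift. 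The upshot is the explicit recursion
\[
G_r(z_1,\ldots,z_r;t_\pm)=G_{r-1}(z_1,\ldots,z_{r-1}+z_r;t_\pm)\,\frac{e^{z_r}}{1-e^{z_r}}
-G_{r-1}(z_1,\ldots,z_{r-1};t_\pm)\,\frac{e^{t_-z_r}}{1-e^{z_r}},
\]
in which the poles at $z_r=0$ visibly cancel between the two terms, so you need not invoke the polynomiality argument to justify holomorphy. Verifying that the right-hand side of \eqref{eq:G_r} satisfies this recursion is then a short direct computation. Your heuristic Kontsevich-order paragraph, explaining where each factor and the sign $(-1)^{r-i}$ come from, is a nice piece of motivation that the paper does not include.
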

\begin{proof}
First we treat the case of $r=1$. Since 
\[\zeta^{}_\hatU(-k;t_\pm)=(-1)^{-k}\zeta(-k;t_-)+\zeta(-k;-t_+)
=\frac{B_{k+1}(t_-)}{k+1}+(-1)^k\frac{B_{k+1}(-t_+)}{k+1}, \]
this is indeed a rational polynomial. Moreover, we have 
\begin{align*}
\sum_{k\ge 0}\zeta^{}_\hatU(-k;t_\pm)\frac{z^k}{k!}
&=\sum_{k\ge 0}B_{k+1}(t_-)\frac{z^k}{(k+1)!}+\sum_{k\ge 0}B_{k+1}(-t_+)\frac{(-z)^k}{(k+1)!}\\
&=\biggl(\frac{e^{t_-z}}{e^z-1}-\frac{1}{z}\biggr)
+\biggl(\frac{e^{t_+z}}{e^{-z}-1}-\frac{1}{-z}\biggr)\\
&=e^{t_+z}\frac{e^z}{1-e^z}-e^{t_-z}\frac{1}{1-e^z}. 
\end{align*}
This shows the claim for $r=1$. 

For $r\ge 2$, we proceed by induction on $r$. 
By using Theorem \ref{thm:mainU}, we can deduce that $\zeta^{}_\hatU(-k_1,\ldots,-k_r;t_\pm)$ is 
a rational polynomial in $t_\pm$ from the induction hypothesis. 
Moreover, by Theorem \ref{thm:mainU} (iii), we have 
\begin{align*}
&\sum_{k_r\ge 0}\zeta^{}_\hatU(-k_1,\ldots,-k_r;t_\pm)\frac{z_r^{k_r}}{k_r!}\\
&=\sum_{k_r\ge 0}\frac{z_r^{k_r}}{(k_r+1)!}\sum_{j=0}^{k_r+1}\binom{k_r+1}{j}B_j
\bigl\{(-1)^j\zeta^{}_\hatU(-k_1,\ldots,-k_{r-1};t_\pm)t_-^{k_r+1-j}\\
&\hspace{180pt}-\zeta^{}_\hatU(-k_1,\ldots,-k_{r-1}-k_r-1+j;t_\pm)\bigr\}\\
&=\sum_{j,l\ge 0}\frac{z_r^{j+l-1}}{j!\;l!}B_j\bigl\{(-1)^j\zeta^{}_\hatU(-k_1,\ldots,-k_{r-1};t_\pm)t_-^l
-\zeta^{}_\hatU(-k_1,\ldots,-k_{r-1}-l;t_\pm)\bigr\}\\
&=\sum_{l\ge 0}\zeta^{}_\hatU(-k_1,\ldots,-k_{r-1}-l;t_\pm)\frac{z_r^l}{l!}\cdot\frac{e^{z_r}}{1-e^{z_r}}
-\zeta^{}_\hatU(-k_1,\ldots,-k_{r-1};t_\pm)\cdot\frac{e^{t_-z_r}}{1-e^{z_r}}. 
\end{align*}
Therefore, we see that 
\begin{align*}
G_r(z_1,\ldots,z_r;t_\pm)
&=\sum_{k_1,\ldots,k_{r-1},l\ge 0}
\zeta^{}_\hatU(-k_1,\ldots,-k_{r-1}-l;t_\pm)\frac{z_1^{k_1}}{k_1!}\cdots\frac{z_{r-1}^{k_{r-1}}}{k_{r-1}!}
\frac{z_r^l}{l!}\cdot\frac{e^{z_r}}{1-e^{z_r}}\\
&\quad -\sum_{k_1,\ldots,k_{r-1}\ge 0}\zeta^{}_\hatU(-k_1,\ldots,-k_{r-1};t_\pm)
\frac{z_1^{k_1}}{k_1!}\cdots\frac{z_{r-1}^{k_{r-1}}}{k_{r-1}!}\cdot\frac{e^{t_-z_r}}{1-e^{z_r}}\\
&=G_{r-1}(z_1,\ldots,z_{r-1}+z_r;t_\pm)\frac{e^{z_r}}{1-e^{z_r}}
-G_{r-1}(z_1,\ldots,z_{r-1};t_\pm)\frac{e^{t_-z_r}}{1-e^{z_r}}. 
\end{align*}
On the other hand, it is elementary to show that the right hand side of \eqref{eq:G_r} satisfies the same 
recurrence relation. Thus, by induction on $r$, we obtain the identity \eqref{eq:G_r}. 
\end{proof}

\begin{rem}\label{rem:Komori}
Theorem \ref{thm:non-positive} may be rephrased as follows: the function 
\begin{align*}
F_r&(z_1,\ldots,z_r;y_\pm)\\
&\coloneqq \sum_{i=0}^r(-1)^{r-i}e^{y_+(z_1+\cdots+z_i)+y_-(z_{i+1}+\cdots+z_r)}
\prod_{l=1}^i\frac{e^{z_l+\cdots+z_i}}{1-e^{z_l+\cdots+z_i}}
\prod_{l=i+1}^r\frac{1}{1-e^{z_{i+1}+\cdots+z_l}}, 
\end{align*}
has an expansion 
\begin{equation}\label{eq:F_r expansion}
F_r(z_1,\ldots,z_r;y_\pm)
=\sum_{k_1,\ldots,k_r\ge 0}P_r(k_1,\ldots,k_r;y_\pm)\frac{z_1^{k_1}}{k_1!}\cdots\frac{z_r^{k_r}}{k_r!}
\end{equation}
with rational polynomial coefficients $P_r(k_1,\ldots,k_r;y_\pm)\in\Q[y_\pm]$, and the identity 
\[\zeta^{}_\hatU(-k_1,\ldots,-k_r;t_\pm)=P_r(k_1,\ldots,k_r;t_\pm)\]
holds for any $k_1,\ldots,k_r\ge 0$. 
This was the logical order that Komori pursued in the case of $(y_+,y_-)=(0,y)$ (cf.~\cite[Theorems 1.1 and 1.17]{Ko}). 
In our proof of Theorem \ref{thm:non-positive}, we have avoided to show the existence of 
the expansion \eqref{eq:F_r expansion} directly from the definition of $F_r(z_1,\ldots,z_r;y_\pm)$. 
\end{rem}

\begin{rem}
In the notation of Remark \ref{rem:hatA a,b}, we also have  
\[\zeta^{}_\hatA(-k_1,\ldots,-k_r;a_\bp,b_\bp)=P_r(k_1,\ldots,k_r;a_\bp,b_\bp). \]
The proof is the same as that of Theorem \ref{thm:non-positive}. 
\end{rem}

\section*{Acknowledgments}
The authors would like to thank Professor Koji Tasaka who kindly informed us that Rosen's conjecture is needed 
to deduce Conjecture \ref{conj:RKZ} from Conjecture \ref{conj:KZ}.
The authors also deeply appreciate Professor Yasushi Komori's helpful comments, 
especially on the content of \S5.

\end{document}